\newtheorem{theorem}{Theorem}[section]
\newtheorem{lemma}[theorem]{Lemma}
\newtheorem{proposition}[theorem]{Proposition}
\newtheorem{corollary}[theorem]{Corollary}
\newtheorem{definition}[theorem]{Definition}
\newtheorem{example}[theorem]{Example}
\newtheorem{remark}[theorem]{Remark}
\numberwithin{equation}{section}
\begin{document}
\author[Beslikas]{Athanasios Beslikas}
\address{Doctoral School of Exact and Natural Studies,
Institute of Mathematics,
Faculty of Mathematics and Computer Science,
Jagiellonian University,
\L{}ojasiewicza 6, PL30348, Cracow, Poland}
\email{athanasios.beslikas@doctoral.uj.edu.pl}

\author[Sola]{Alan Sola}
\address{Department of Mathematics, Stockholm University, 106 91 Stockholm, Sweden}
\email{sola@math.su.se}

\title[Rational inner functions and Dirichlet spaces]{On the membership of two-variable rational inner functions in spaces of Dirichlet type}

\subjclass[2020]{Primary 32A08, 32A37, 46E22}
\keywords{Rational inner functions, Dirichlet spaces, Puiseux factorization}

\begin{abstract} 
We study membership of rational inner functions on the bidisk $\mathbb{D}^2$ in a scale of Dirichlet spaces considered by Bera, Chavan, and Ghara, and in higher-order variants of these spaces. We give a characterization for membership in terms of the geometric concept of contact order of a rational inner function at its singular points, and we further record some consequences and variants of our main result.
\end{abstract}

\maketitle
\section{Introduction} \label{Section 1}
\subsection{Preliminaries on Rational Inner Functions} 
Let $n \geq 1$ and let
\[\mathbb{D}^n=\{z=(z_1,\ldots, z_n)\in \mathbb{C}^n\colon |z_j|<1, \, j=1,\ldots, n\}\]
denote the unit polydisk. A bounded holomorphic function $\varphi\colon \mathbb{D}^n\to \mathbb{C}$ is said to be \textit{inner} if its non-tangential limits $\varphi^{*}(\zeta)$ exist at almost every point $\zeta$ belonging to the distinguished boundary
\[\mathbb{T}^n=\{\zeta=(\zeta_1,\ldots, \zeta_n)\colon |\zeta_j|=1, \, j=1,\ldots, n\},\]
and satisfy $|\varphi^{*}(\zeta)|=1$ whenever the limit exists.

Inner functions play a central role in the function theory of the polydisc, as well as in the study of operators acting on spaces of holomorphic functions in $\mathbb{D}^n$, see for instance \cite{Rudin,AMY}. The simplest inner functions are the rational ones; in other words, inner functions $\varphi=q/p$ where $q,p \in \mathbb{C}[z_1,\ldots, z_n]$. To avoid trivialities, we assume that $p$ and $q$ have no common factor, and that $p$ is {\it stable} with respect to $\mathbb{D}^n$ meaning that its zero set $\mathcal{Z}_p\cap \mathbb{D}^n=\varnothing$. In fact, $\varphi\colon \mathbb{D}^n\to \mathbb{C}$ is a rational inner function if and only if $\varphi$ admits the representation
\[\varphi(z)=e^{ia}z_1^{N_1}\cdots z_n^{N_n}\frac{\tilde{p}(z)}{p(z)},\]
where $a\in \mathbb{R}$, $N=(N_1,\ldots, N_n)\in \mathbb{N}$, and 
\[\widetilde{p}(z_1,z_2):=z_1^{m_1}\cdots z_n^{m_n}\overline{p\left(\frac{1}{\overline{z_1}},\ldots,\frac{1}{\overline{z_n}}\right)}\]
is the reflection of a stable polynomial $p$. See \cite[Chapter 5]{Rudin}. 

When $n=1$, the class of rational inner functions (abbreviated RIFs) coincides with finite Blaschke products in the unit disc $\mathbb{D}$, as can be seen by applying the fundamental theorem of algebra to the denominator polynomial $p$ and writing it as a product of factors of the form $1-\bar{\lambda}z$ where $\lambda \in \mathbb{D}$. In particular, a one-variable rational inner functions can be written as
\[\phi(z_1)=e^{ia}z_1^N\prod_{k=1}^{n}\frac{|\lambda_k|}{\lambda_k}\frac{z_1-\lambda_k}{1-\bar{\lambda}_kz}.\]
Finite Blaschke products have been studied extensively by many authors, and they are in many ways the nicest inner functions in one variable. For instance, every finite Blaschke product continues holomorphically to a strictly larger disk which contains $\mathbb{D}$ and in particular is a smooth function on $\overline{\mathbb{D}}$. A further example of the smoothness of finite Blaschke products is provided by a well-known result of Beurling which states that if an inner function $\phi\colon \mathbb{D}\to \mathbb{C}$ has  finite Dirichlet integral, that is
\[\int_{\mathbb{D}}|\varphi'(x+iy)|^2dxdy<\infty,\] 
then $\varphi$ is a finite Blaschke product (viz.  \cite[Theorem 7.6.9]{Ransford}). We refer the reader to \cite{GMR} for a comprehensive account of finite Blaschke products.

By contrast, an important feature of RIFs that distinguishes them from their one-dimensional counterparts is that they can have singularities on the distinguished boundary $\mathbb{T}^n:$
these occur at points $\zeta \in \mathbb{T}^n$ where $p$ and hence also $\tilde{p}$ vanish. Despite not being continuous on $\overline{\mathbb{D}^n}$ when singularities are present \cite{Pas17}, RIFs do have some degree of regularity: by a remarkable theorem of Knese \cite[Theorem C]{Knese2}, they do have a non-tangential limit at {\it every} point on the $n$-torus $\mathbb{T}^n$, and the value of this limit is unimodular. Thus, RIFs in two or more variables can be viewed as striking a balance between singular behavior and some residual smoothness. They exhibit a number of intimate connections with different branches of mathematics: 
\begin{enumerate}
\item  In operator theory in Hilbert function spaces (see for instance \cite{AMY}, \cite{Knese1} and \cite{Hilbert1}), they serve as generators of certain classes of shift-invariant subspaces, give rise to tractable types of pluriharmonic measures \cite{ClarkMeasures1,CLARK 2}, and induce composition operators \cite{Me}. 
\item They feature as solutions to interpolation problems on the polydisc (see for example the celebrated Agler's Pick Interpolation Theorem \cite{AMY}, or the seminal work of Kosi\'nski  \cite{Lukasz} on the three point Nevanlinna-Pick interpolation problem on the polydisc).
\item They have connections with concepts from classical algebraic geometry, specifically, with Puiseux factorization theorems and the study of the zero sets of stable polynomials in two or more complex dimensions, among other related results, see \cite{Pisa, Polonici, AJM}
\item Lastly, RIFs feature in function theory in the guise of Hilbert and Banach spaces of holomorphic functions, and their boundary behavior has been studied via membership of RIFs in the Dirichlet spaces on the bidisc as well as the that of their derivatives in Hardy spaces \cite{PLMS, AJM,Bergqvist}. 

\end{enumerate}
Our work is motivated mainly by (4), but the connections with the other directions above will be evident in the sequel. 

From now on, let us take $n=2$; the two-variable theory is generally better understood. There are several reasons for this, some of which will be reviewed shortly. Two important examples of RIFs on the bidisk with boundary singularities are furnished by
$$\kappa(z_1,z_2)=\frac{2z_1z_2-z_1-z_2}{2-z_1-z_2}$$
and
$$\psi(z_1,z_2)=\frac{4z_1^2z_2-z_1^2-3z_1z_2-z_1+z_2}{4-3z_1-z_2-z_1z_2+z_1^2},$$
where $(z_1,z_2)\in\mathbb{D}^2.$

The first of these functions has appeared frequently in the literature, in particular in work of Knese, whereas the second function (sometimes called the AMY example) appears in a paper by Agler, McCarthy, and Young \cite{AMY}. The reader is invited to verify the discontinuous nature of these functions at the point $(1,1)$ by examining them on the sets $\{(\zeta_1,1)\colon \zeta_1 \in \mathbb{T}\}$ and $\{(\zeta_1,\bar{\zeta}_1)\colon \zeta_1\in \mathbb{T}\}$. 

The papers \cite{PLMS,Pisa} study singularities of two-variable RIFs using several different approaches. One way of quantifying the singular nature of a RIF in $\mathbb{D}^2$ is via derivative integrability, for instance by determining the values of $\mathfrak{p}\geq 1$ such that $\frac{\partial \phi}{\partial z_j} \in L^{\mathfrak{p}}(\mathbb{T}^2)$. A main result of \cite{PLMS} is the complete characterization of derivative integrability in terms of a geometric quantity called contact order; see Section \ref{Section 3} for a formal definition. The paper \cite{Pisa} continues these investigations and proves, among other things, that the unimodular level sets of a RIF $\phi=\frac{\tilde{p}}{p}$ satisfy
\[\mathcal{C}_{\alpha}=\mathrm{clos}\{\zeta \in \mathbb{T}^2\colon \phi^{*}(\zeta)=\alpha\}=\{\zeta \in \mathbb{T}^2\colon \tilde{p}(\zeta)-\alpha p(\zeta)=0\}\] 
and can be parametrized by analytic functions. Refined local descriptions of the zero sets of the polynomials $p$ and $\tilde{p}$ as well as $\tilde{p}-\alpha p$ have been developed in \cite{Polonici} and \cite{TAMSKNESE}, with a view towards applications including analysis of RIF derivative singularities.

In higher dimensions, matters become more complicated as is explained in the papers \cite{AJM,Bergqvist}. For instance, different partial derivatives may exhibit different derivative integrability, level sets $\mathcal{C}_{\alpha}$ cannot in general be described as graphs of continuous functions, and cyclicity of a polynomial is determined not only by the size of its zero set but also its location within $\mathbb{T}^n.$ In light of these facts, we restrict our attention to two-variable RIFs in the present work.

\subsection{Dirichlet Spaces on the bidisc} 
In this paper, we contribute to the study of singular behavior of RIFs by examining these functions in norms of Dirichlet type, which involve partial derivatives of a RIF. Before we present our results, we need to review several definitions of Dirichlet spaces.

Let $\vec{\alpha}\in\mathbb{R}^2.$ The Dirichlet spaces $\mathfrak{D}_{\vec{\alpha}}$ on the bidisc can be defined 
using the Taylor series representation $f=\sum_{k=0}^{\infty}\sum_{\ell=0}^{\infty}a_{k\ell}z_1^kz_2^{\ell}$ of a holomorphic function  as
$$\mathfrak{D}_{\vec{\alpha}}(\mathbb{D}^2):=\Biggl\{ f\in \mathcal{O}(\mathbb{D}^2):\sum_{k=0}^{\infty}\sum_{\ell=0}^{\infty}(k+1)^{\alpha_1}(\ell+1)^{\alpha_2}|a_{k\ell}|^2<\infty\Biggr\}.$$
In the restricted range $\alpha_j\in(0,2)$ we have an equivalent definition of these spaces in terms of integrals, namely
\[\mathfrak{D}_{\vec{\alpha}}(\mathbb{D}^2):=\Biggl\{ f\in \mathcal{O}(\mathbb{D}^2):\int_{\mathbb{D}^2}\left|\frac{\partial^2(z_1z_2f(z_1,z_2))}{\partial z_1 \partial z_2}\right|^2dA_{\alpha_1}(z_1)dA_{\alpha_2}(z_2)<\infty\Biggr\},\] 
for a pair $\vec{\alpha}=(\alpha_1,\alpha_2)\in(0,2)^2,$ where
$dA_{\alpha_j}(z_j)=(1-|z_j|^2)^{1-\alpha_j}dA(z_j)$ and $dA(z_j)=dx_jdy_j/\pi$ is normalized Lebesgue measure on the unit disc.

The spaces $\mathfrak{D}_{\vec{\alpha}}$ were introduced by Kaptano\u{g}lu in \cite{Kaptanoglu} and have subsequently been studied by several authors; see for instance \cite{JP,Perfekt,Arcoetal} and the references provided there. These spaces are natural extension of the standard scale of one-variable weighted Dirichlet spaces $D_{\alpha}$ consisting of holomorphic functions $f=\sum_{k=0}^{\infty}a_kz^k$ in $\mathbb{D}$ whose Taylor coefficients satisfy
\[\|f\|^2_{\alpha}=\sum_{k=0}^{\infty}(k+1)^{\alpha}|a_k|^2<\infty.\] 
The spaces $\mathfrak{D}_{\vec{\alpha}}$ can be viewed as tensor products of the one-variable spaces, $\mathfrak{D}_{\vec{\alpha}}=D_{\alpha_1}\otimes D_{\alpha_2}$ and $\mathfrak{D}=\mathfrak{D}_{(1,1)}$ is characterized in terms of its invariance with respect to automorphisms of $\mathbb{D}^2$. In particular, the papers \cite{Cyclicity1, Cyclicity2} provide a complete characterization of which polynomials are cyclic in $\mathfrak{D}_{\alpha}$. Moreover, in the paper \cite{PLMS} the authors studied the membership of RIFs on the Dirichlet-type spaces of the bidisc, and the membership of the partial derivatives of RIFs on the Hardy space of the bidisc. In the latter case, a complete characterization is obtained in \cite{PLMS} but the membership problem in $\mathfrak{D}_{\alpha}$ remains open for general two-variable RIFs. The following conjecture is implicit in \cite{PLMS}, and is supported by \cite[Theorem 10.2]{PLMS} and other results in that paper and in \cite{Bergqvist}. 

\begin{itemize}
\item \textbf{Conjecture 1:} Let $\varphi=\frac{\tilde{p}}{p}$ be a Rational Inner Function on the bidisc with at least one singularity. Then $\varphi\notin\mathfrak{D}(\mathbb{D}^2).$\\
\end{itemize}
A refined version of \textbf{Conjecture 1} would ask for a characterization of $\mathfrak{D}_{\vec{\alpha}}$-membership of a RIF, again in terms of contact order.

The main goal of this paper is to characterize RIF membership in a different but related scale of Dirichlet-type spaces, namely those studied by Bera, Chavan, and Ghara \cite{Chavan}. For $f=\sum_{k=0}^{\infty}\sum^{\infty}_{\ell=0} a_{kl}z_1^kz_2^{\ell}$ a holomorphic function on the bidisc, they consider the following spaces using the coefficient norm
\begin{align}
\mathcal{D}(\mathbb{D}^2):=\Biggl\{f\in \mathcal{O}(\mathbb{D}^2): \sum_{k=0}^{\infty}\sum_{\ell=0}^{\infty}(k+\ell+1)|a_{k\ell}|^2<+\infty\Biggr\}. \notag 
\end{align}
The space $\mathcal{D}(\mathbb{D}^2)$ is a Hilbert space, and possesses an equivalent integral norm
\begin{align}
\|f\|^2_{\mathcal{D}(\mathbb{D}^2)}=\|f\|_{H^2(\mathbb{D}^2)}^2+\sup_{r<1}\int_{\mathbb{T}}\int_{\mathbb{D}}\left|\frac{\partial f}{\partial z_1}(z_1,re^{i\theta})\right|^2dA(z_1)d\theta  \\+\sup_{r<1}\int_{\mathbb{T}}\int_{\mathbb{D}}\left|\frac{\partial f}{\partial z_2}(re^{i\theta},z_2)\right|^2dA(z_2)d\theta \notag
\end{align}

In our paper, we consider weighted versions, and study the membership of RIFs in these spaces. Fix $\alpha\in(0,2).$ The \textit{mixed-norm Dirichlet-type spaces} $\mathcal{D}_{1,1,\alpha}(\mathbb{D}^2)$ consist of holomorphic functions $f\colon \mathbb{D}^2\to \mathbb{C}$ satisfying the norm boundedness condition
\begin{align}
\|f\|^2_{\mathcal{D}_{1,1,\alpha}(\mathbb{D}^2)}=\|f\|_{H^2(\mathbb{D}^2)}^2+\sup_{r<1}\int_{\mathbb{T}}\int_{\mathbb{D}}\left|\frac{\partial f}{\partial z_1}(z_1,re^{i\theta})\right|^2(1-|z_1|^2)^{1-\alpha}dA(z_1)d\theta\\+\sup_{r<1}\int_{\mathbb{T}}\int_{\mathbb{D}}\left|\frac{\partial f}{\partial z_2}(re^{i\theta},z_2)\right|^2(1-|z_2|^2)^{1-\alpha}dA(z_2)d\theta \notag .
\label{weightedBCGspaces}
\end{align}
As is explained in \cite{Chavan}, the space $\mathcal{D}$ is closely connected with cyclic analytic $2$-isometries. Indeed, $\mathcal{D}$ together with its weighted counterparts feature in a variant of Richter's representation theorem (see \cite{Ransford}) for pairs of operators \cite[Theorem 2.4]{Chavan}. Weighted spaces corresponding to \eqref{weightedBCGspaces} were recently studied by Nailwal and Zalar and by Torkinejad-Ziarati in \cite{NZprep, Pouriya1}, with a focus on polynomials cyclic with respect to shift operators.

The advantage of mixed-norm Dirichlet-type spaces in the present context is that in the integral representation of their norm, one integral is taken over the circle $\mathbb{T}.$ If the integrand is a RIF, say $\varphi(z_1,z_2),$ then for each $\zeta_2\in \mathbb{T}$ the slice function $z_1\mapsto \varphi_{\zeta_2}(z_1)=\varphi(z_1,\zeta_2)$ is a finite Blaschke product, whose zeros depend on the $\zeta_2$-variable. These mixed-norm spaces in general differ from the mixed-partial $\mathfrak{D}_\alpha$ spaces but we will see that there do exist connections between $\mathcal{D}_\alpha$ and $\mathfrak{D}_\alpha.$ 

Naturally, further generalizations of $\mathcal{D}_{1,1,\alpha}$ are possible, for instance by introducing weight functions, or by considering derivatives beyond first partials. In the latter direction, we consider higher-order weighted Dirichlet spaces on the bidisc, denoted by $\mathcal{D}_{n,m}(\mathbb{D}^2),$ which are endowed with the norm
\begin{align}
\|f\|^2_{\mathcal{D}_{m,n}(\mathbb{D}^2)}=\|f\|_{H^2(\mathbb{D}^2)}^2+&\sup_{r<1}\int_{\mathbb{T}}\int_{\mathbb{D}}\left|\frac{\partial^m f}{\partial z_1^m}(z_1,re^{i\theta})\right|^2dA(z_1)d\theta&&\\+&\sup_{r<1}\int_{\mathbb{T}}\int_{\mathbb{D}}\left|\frac{\partial^n f}{\partial z_2^n}(re^{i\theta},z_2)\right|^2dA(z_2)d\theta, \notag
\end{align}
This norm is equivalent to the coefficient norm
$$\|f\|^2_{\mathcal{D}_{m,n}(\mathbb{D}^2)}\approx \sum_{k=0}^{\infty}\sum_{\ell=0}^{\infty}(k^{2m-1}+\ell^{2n-1}+1)|a_{k\ell}|^2.$$
For $n=m=1$ we recover the space $\mathcal{D}(\mathbb{D}^2)=\mathcal{D}_{1,1}(\mathbb{D}^2)$ as defined above.

In the present paper we essentially resolve the problem of membership of RIFs in the spaces $\mathcal{D}_{1,1,\alpha}(\mathbb{D}^2)$. As we will later observe, equivalently, we characterize the membership of RIFs in the intersection $\mathfrak{D}_{(\alpha,0)}(\mathbb{D}^2)\cap \mathfrak{D}_{(0,\alpha)}(\mathbb{D}^2)$ of anisotropic Dirichlet spaces. Moreover, we address the membership problem for a restricted class of RIFs in the higher order derivative spaces $\mathcal{D}_{n,m}(\mathbb{D}^2)$, and thus obtain some further albeit somewhat circumstantial support for Conjecture 1.

\section{Main results} \label{section 2}
In \cite{PLMS}, membership of the partial derivatives of RIFs in the $H^{\mathfrak{p}}(\mathbb{D}^2)$ was completely characterized in terms of a geometric quantity called \textit{contact order}, which informally measures the speed at which the zero sets of the slice functions $z_1\mapsto \varphi(z_1,\zeta_2)$ and $z_2\mapsto \varphi(\zeta_1,z_2)$ approach $\mathbb{T}$. We review this notion in detail in Section \ref{Section 3} but emphasize here that this is a characteristic of the underlying polynomial $p$.

The main result we obtain in the present paper is a characterization for the membership in $\mathcal{D}_{1,1,\alpha}(\mathbb{D}^2)$ of RIFs. The statement is again formulated in terms of the contact order of a RIF, and is strikingly similar to the corresponding result concerning $H^{\mathfrak{p}}$-membership of partial derivatives, see \cite[Theorem 4.1]{PLMS}. 

\begin{theorem} Let $p\in\mathbb{C}[z_1,z_2]$ be stable on $\mathbb{D}^2.$  Then, $\varphi=\frac{\tilde{p}}{p}\in \mathcal{D}_{1,1,\alpha}(\mathbb{D}^2)$ if and only if $\alpha<1+\frac{1}{K},$ where $K$ is the contact order of $\varphi$.
\label{membership}
\end{theorem}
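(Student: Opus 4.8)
The plan is to exploit the structural advantage of the mixed norm emphasized in the introduction: one of the two integrals is taken over the circle $\mathbb{T}$, which lets the problem decouple into a one-parameter family of one-variable weighted Dirichlet computations for the slice Blaschke products. Since $\varphi$ is inner, it lies in $H^\infty(\mathbb{D}^2)\subset H^2(\mathbb{D}^2)$, so the $H^2$ term is finite and plays no role; by the symmetry $z_1\leftrightarrow z_2$ it suffices to analyze the term built from $\partial\varphi/\partial z_1$. First I would fix $z_1\in\mathbb{D}$ and use that the zeros of $p$ in $\overline{\mathbb{D}}^2$ lie on $\mathbb{T}^2$ (so $p(z_1,\cdot)$ is zero-free on $\overline{\mathbb{D}}$) to see that $z_2\mapsto\partial_{z_1}\varphi(z_1,z_2)$ is holomorphic across $\overline{\mathbb{D}}$; expanding in powers of $z_2$ shows $r\mapsto\int_{\mathbb{T}}|\partial_{z_1}\varphi(z_1,re^{i\theta})|^2\,d\theta$ is nondecreasing with limit $\int_{\mathbb{T}}|\partial_{z_1}\varphi(z_1,\zeta_2)|^2\,|d\zeta_2|$. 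An application of Tonelli (all integrands nonnegative) together with monotone convergence then shows that the supremum over $r<1$ of the first integral equals
\[
\int_{\mathbb{T}}\Bigl(\int_{\mathbb{D}}|\varphi_{\zeta_2}'(z_1)|^{2}(1-|z_1|^{2})^{1-\alpha}\,dA(z_1)\Bigr)\,|d\zeta_2|,
\]
where $\varphi_{\zeta_2}(z_1)=\varphi(z_1,\zeta_2)$ is a finite Blaschke product in $z_1$ for a.e. $\zeta_2\in\mathbb{T}$. Thus membership in $\mathcal{D}_{1,1,\alpha}$ is equivalent to finiteness of this integral and of its $z_1\leftrightarrow z_2$ analogue.

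Next I would perform the one-variable computation. For a single factor $b_\lambda(z)=(z-\lambda)/(1-\bar\lambda z)$, the standard asymptotic
\[
\int_{\mathbb{D}}\frac{(1-|z|^{2})^{t}}{|1-\bar\lambda z|^{s}}\,dA(z)\asymp(1-|\lambda|^{2})^{\,2+t-s}\qquad(s>t+2),
\]
applied with $t=1-\alpha\in(-1,1)$ and $s=4$, yields $\int_{\mathbb{D}}|b_\lambda'|^{2}(1-|z|^{2})^{1-\alpha}\,dA\asymp(1-|\lambda|^{2})^{1-\alpha}$. For the slice product $\varphi_{\zeta_2}=c\prod_k b_{\lambda_k(\zeta_2)}$ the upper bound is immediate: the product rule, $|b_{\lambda_k}|\le1$ on $\mathbb{D}$, and the Cauchy--Schwarz inequality give $|\varphi_{\zeta_2}'|^{2}\le d\sum_k|b_{\lambda_k}'|^{2}$ with $d$ bounded by $\deg_{z_1}\tilde p$, whence the slice energy is $\lesssim\sum_k(1-|\lambda_k(\zeta_2)|^{2})^{1-\alpha}$. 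The matching lower bound I would obtain by localizing the $z_1$-integral to a small disk about the zero $\lambda_k(\zeta_2)$ lying closest to $\mathbb{T}$, on which the remaining factors are bounded away from $0$; here the hypothesis that $p$ is locally square free on $\mathbb{T}^2$ is used to keep the zeros simple and separated as $\zeta_2$ approaches a singular point. Combining,
\[
\int_{\mathbb{D}}|\varphi_{\zeta_2}'(z_1)|^{2}(1-|z_1|^{2})^{1-\alpha}\,dA(z_1)\asymp\sum_{k}\bigl(1-|\lambda_k(\zeta_2)|^{2}\bigr)^{1-\alpha}.
\]
As a consistency check, at $\alpha=1$ this sum equals the number of slice zeros, recovering $\int_{\mathbb{D}}|B'|^{2}\,dA=\deg B$ and the fact that every such RIF lies in $\mathcal{D}=\mathcal{D}_{1,1,1}$.

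Finally I would bring in the contact order. The slice of $\varphi$ is singular only at the finitely many $\zeta_2\in\mathbb{T}$ lying below the singularities of $p$; away from these the zeros $\lambda_k(\zeta_2)$ remain in a fixed compact subset of $\mathbb{D}$ and the slice energy is bounded, contributing only a finite amount to the integral over $\mathbb{T}$. Near such a point $\zeta_2^{*}$, the local Weierstrass--Puiseux factorization of $p$ parametrizes the colliding zero--pole pairs by branches, and the definition of contact order gives $1-|\lambda_k(\zeta_2)|\asymp|\zeta_2-\zeta_2^{*}|^{K}$ along the extremal branch, the other branches being of lower order. By the two-sided bound of the previous step the slice energy is then $\asymp|\zeta_2-\zeta_2^{*}|^{K(1-\alpha)}$, so that
\[
\int_{0}^{\delta}t^{\,K(1-\alpha)}\,dt<\infty\iff K(1-\alpha)>-1\iff\alpha<1+\tfrac1K.
\]
Taking $K$ to be the maximal contact order over all singular points (which governs the worst blow-up, and for which $1+\tfrac1K\in(1,2]$ lies in the admissible range $\alpha\in(0,2)$) and summing the local contributions yields the stated equivalence.

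I expect the main obstacle to be the lower bound in the second step: upgrading the easy pointwise upper estimate to a two-sided comparison that is uniform as $\zeta_2\to\zeta_2^{*}$, where several zeros may approach $\mathbb{T}$ simultaneously along distinct Puiseux branches. Ruling out cancellation among the product-rule cross terms and ensuring that the comparability constants do not degenerate in this limit is precisely where the local square-free hypothesis and the fine geometry of the Puiseux factorization must be used with care; it is also the point at which the weight exponent $1-\alpha$ interacts most delicately with the rate of approach of the zeros, and hence with the contact order.
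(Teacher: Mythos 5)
Your sufficiency argument is essentially the paper's: the reduction of the supremum over $r<1$ to an integral over $\mathbb{T}$ (the paper's first lemma), the slice-by-slice view of $\varphi_{\zeta_2}$ as a finite Blaschke product, the product-rule/Cauchy--Schwarz bound $|\varphi_{\zeta_2}'|^2\lesssim\sum_k|b_{\lambda_k}'|^2$, the Rudin--Forelli asymptotics with $\beta=1-\alpha$, $\gamma=1+\alpha$, and finally the PLMS branch estimate giving $\int_U|\zeta_2-\tau_2|^{K(1-\alpha)}\,|d\zeta_2|$, which converges precisely when $\alpha<1+\frac1K$. That half is complete and matches the paper step for step (and, as the paper notes, needs no square-free hypothesis).

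The necessity half, however, has a genuine gap, and you have named it yourself without closing it. Your lower bound hinges on the claim that, when you localize near the zero $\lambda_k(\zeta_2)$ closest to $\mathbb{T}$, the remaining Blaschke factors are bounded away from $0$ uniformly as $\zeta_2\to\zeta_2^{*}$ --- equivalently, that the pairwise hyperbolic pseudodistances $\rho_{\mathbb{D}}\bigl(\lambda_j(\zeta_2),\lambda_k(\zeta_2)\bigr)$ of the zeros colliding with $\mathbb{T}$ stay bounded below near the singular parameter. Deferring this to ``the local square-free hypothesis and the fine geometry of the Puiseux factorization must be used with care'' is not a proof: this separation statement \emph{is} the core of the necessity direction, not a technicality surrounding it. The paper handles it in two steps: first, a bounded-point-evaluation estimate in weighted Bergman spaces (Lemma \ref{estimate}), which turns the slice energy into a lower bound of the form $(1-|a_j|^2)^{1-\alpha}\prod_{k\neq j}\rho_{\mathbb{D}}(a_j,a_k)^2$ and is where simplicity of the zeros (local square-freeness) is structurally required; second, a transfer to the upper half-plane and the Puiseux local model $[p](z_1,z_2)=\prod_j\bigl(z_1+q_j(z_2)+iz_2^{2L_j}\bigr)$ of Theorem \ref{KneseLocal}, where square-freeness guarantees that no two branches carry identical data $(q_j,L_j)$, followed by an explicit three-case computation (equal $q_j$ with different $L_j$; different $q_j$ with equal $L_j$; both different) showing that each pairwise pseudodistance in fact tends to $1$ as $z_2\to0$ along $\mathbb{R}$. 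Without this (or an equivalent) branch-separation analysis, your localization could degenerate: a priori two distinct zeros might approach $\mathbb{T}$ while becoming hyperbolically close, and the comparability constants in your claimed two-sided bound would collapse exactly in the limit that matters. One point in your favor: anchoring the lower bound at the zero \emph{closest} to $\mathbb{T}$ correctly targets the extremal branch realizing the global contact order $K$, so once the separation fact is supplied, your argument does yield the divergence for $\alpha\geq1+\frac1K$.
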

Let us point out that if $\varphi$ has no singularities, then $K=0$ and the statement of the theorem is trivial. 

The techniques used to obtain Theorem \ref{membership} are reminiscent of those in \cite{PLMS}, with some important differences in the part of the proof that establishes necessity.

One immediately deduces the following two corollaries.
\begin{corollary} Let $\varphi$ be as in Theorem \ref{membership}. Then, $\varphi\in \mathfrak{D}_{(\alpha,0)}(\mathbb{D}^2)\cap \mathfrak{D}_{(0,\alpha)}(\mathbb{D}^2)$ if and only if $\alpha<1+\frac{1}{K}.$
\label{anisotropicchar}
\end{corollary}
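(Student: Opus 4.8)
The plan is to deduce Corollary \ref{anisotropicchar} from Theorem \ref{membership} by establishing a norm equivalence: I claim that for \emph{every} $f\in\mathcal{O}(\mathbb{D}^2)$ and every fixed $\alpha\in(0,2)$ one has
\[
\|f\|^2_{\mathcal{D}_{1,1,\alpha}(\mathbb{D}^2)}\approx \sum_{k=0}^\infty\sum_{\ell=0}^\infty\bigl[(k+1)^\alpha+(\ell+1)^\alpha\bigr]|a_{k\ell}|^2,
\]
where the right-hand side is precisely the squared coefficient norm of the intersection space $\mathfrak{D}_{(\alpha,0)}(\mathbb{D}^2)\cap\mathfrak{D}_{(0,\alpha)}(\mathbb{D}^2)$. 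Granting this, membership in $\mathcal{D}_{1,1,\alpha}(\mathbb{D}^2)$ and membership in the intersection are equivalent, and applying Theorem \ref{membership} to the given $\varphi=\tilde p/p$ immediately yields the stated characterization $\alpha<1+\tfrac1K$. Thus the entire content of the corollary is this norm comparison, which I would isolate as the single computation to carry out.

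To establish the equivalence, I would convert each of the two weighted integral terms in $\|f\|^2_{\mathcal{D}_{1,1,\alpha}}$ into a Taylor coefficient sum; by symmetry in the two variables it suffices to treat the first term. Writing $\partial f/\partial z_1=\sum_{\ell\ge 0}g_\ell(z_1)z_2^\ell$ with $g_\ell(z_1)=\sum_{k\ge 1}k\,a_{k\ell}z_1^{k-1}$ and substituting $z_2=re^{i\theta}$, orthogonality of $\{e^{i\ell\theta}\}$ on $\mathbb{T}$ collapses the angular integral to $\int_{\mathbb{T}}|\tfrac{\partial f}{\partial z_1}(z_1,re^{i\theta})|^2\,d\theta=\sum_{\ell}|g_\ell(z_1)|^2 r^{2\ell}$ (up to the normalizing constant). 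Integrating $|g_\ell|^2$ against the radial weight and using orthogonality of $\{z_1^{k-1}\}$ on $\mathbb{D}$ reduces everything to the one-dimensional moments
\[
\int_{\mathbb{D}}|z_1|^{2(k-1)}(1-|z_1|^2)^{1-\alpha}\,dA(z_1)=B(k,2-\alpha)=\frac{\Gamma(k)\,\Gamma(2-\alpha)}{\Gamma(k+2-\alpha)},
\]
which converge precisely because $\alpha<2$. Stirling's formula gives $\Gamma(k)/\Gamma(k+2-\alpha)\sim k^{\alpha-2}$, so the factor $k^2$ coming from differentiation combines with $k^{\alpha-2}$ to produce the weight $k^\alpha\asymp(k+1)^\alpha$; hence the first integral term is comparable to $\sum_{k\ge1,\ell}(k+1)^\alpha r^{2\ell}|a_{k\ell}|^2$, and the supremum over $r<1$ equals the full sum $\sum_{k\ge1,\ell}(k+1)^\alpha|a_{k\ell}|^2$ by monotone convergence of nonnegative terms.

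It remains to reconcile the indices. The differentiated terms miss the coefficients with $k=0$ (respectively $\ell=0$), but these are supplied with weight $(0+1)^\alpha=1$ by the $H^2(\mathbb{D}^2)$ summand $\sum_{k,\ell}|a_{k\ell}|^2$, so adding the $H^2$ norm upgrades $\sum_{k\ge1,\ell}(k+1)^\alpha|a_{k\ell}|^2$ to $\sum_{k,\ell}(k+1)^\alpha|a_{k\ell}|^2$ up to the two-sided constant. Combining the two directional terms then yields the asserted equivalence. I expect the only delicate point to be the uniform control of the Gamma-ratio asymptotics across all $k$ (ensuring the implied constants are independent of $k$ and $\ell$ on the stated range $\alpha\in(0,2)$), together with verifying that the supremum in $r$ genuinely reproduces the full coefficient sum; both are routine once the moment computation is in place. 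No special structure of RIFs is used here — the equivalence is a general statement about holomorphic functions on $\mathbb{D}^2$ — and the characterization of the exponent is inherited entirely from Theorem \ref{membership}.
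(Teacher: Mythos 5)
Your proposal is correct, and it reaches the corollary by the same overall architecture as the paper — establish that membership in $\mathcal{D}_{1,1,\alpha}(\mathbb{D}^2)$ is equivalent to membership in $\mathfrak{D}_{(\alpha,0)}(\mathbb{D}^2)\cap\mathfrak{D}_{(0,\alpha)}(\mathbb{D}^2)$ for arbitrary holomorphic $f$, then quote Theorem \ref{membership} — but the way you prove that equivalence is genuinely different. The paper isolates this step as Proposition \ref{claDir} and proves it with integral-side tools: the slice-wise identity $\sup_{r<1}\int_{\mathbb{T}}|\partial_{z_1}f(z_1,re^{i\theta})|^2\,d\theta=\|\partial_{z_1}f(z_1,\cdot)\|^2_{H^2(\mathbb{D})}$, followed by the one-variable Littlewood--Paley formula and Fubini, which converts each mixed-norm term into the anisotropic weighted integral $\int_{\mathbb{D}^2}|\partial_{z_1}\partial_{z_2}f|^2\,dA_{0}\,dA_{\alpha}$; no Taylor coefficients appear. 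You instead work entirely on the coefficient side: orthogonality in $\theta$, the Beta-function moments $\int_{\mathbb{D}}|z_1|^{2(k-1)}(1-|z_1|^2)^{1-\alpha}dA(z_1)=\Gamma(k)\Gamma(2-\alpha)/\Gamma(k+2-\alpha)$, and Stirling asymptotics, yielding the explicit equivalence $\|f\|^2_{\mathcal{D}_{1,1,\alpha}}\approx\sum_{k,\ell}[(k+1)^\alpha+(\ell+1)^\alpha]|a_{k\ell}|^2$, which matches the paper's coefficient definition of $\mathfrak{D}_{(\alpha,0)}\cap\mathfrak{D}_{(0,\alpha)}$ exactly. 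Your route is more elementary and self-contained (it avoids invoking Littlewood--Paley, and it sidesteps the boundary terms $|\partial_{z_1}f(z_1,0)|^2$ that the integral approach must absorb into the equivalence), and as a byproduct it produces a coefficient norm for $\mathcal{D}_{1,1,\alpha}$ extending the $(k+\ell+1)$ description of $\mathcal{D}(\mathbb{D}^2)$ to all $\alpha\in(0,2)$; the paper's route, on the other hand, directly exhibits the link between the mixed norm and the integral representation of the anisotropic Dirichlet norms, which is the form used elsewhere in the paper. The two points you flag — uniformity in $k$ of the Gamma-ratio comparison and the monotone-convergence argument for the supremum in $r$ — are indeed routine and close the argument.
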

The proof is an application of Proposition \ref{claDir}.
\begin{corollary} Let $\varphi$ be a RIF. If $\alpha<\frac{1}{2}+\frac{1}{2K},$ then $\varphi\in \mathfrak{D}_{\alpha}(\mathbb{D}^2).$
\label{sufficient0}
\end{corollary}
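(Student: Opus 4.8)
The plan is to deduce the corollary directly from the sufficiency half of Corollary \ref{anisotropicchar}, combined with an elementary pointwise comparison of the coefficient weights; no new analysis of the RIF is required. First I would recall the relevant norms at the level of Taylor coefficients: the isotropic space $\mathfrak{D}_\alpha=\mathfrak{D}_{(\alpha,\alpha)}$ carries the squared norm $\sum_{k,\ell}(k+1)^\alpha(\ell+1)^\alpha|a_{k\ell}|^2$, while the intersection $\mathfrak{D}_{(\beta,0)}\cap\mathfrak{D}_{(0,\beta)}$ is equipped with the equivalent norm $\sum_{k,\ell}\big[(k+1)^\beta+(\ell+1)^\beta\big]|a_{k\ell}|^2$. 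The corollary is thus a statement comparing a product weight to a sum weight.

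The crucial observation is the AM--GM inequality applied with $\beta=2\alpha$: for all integers $k,\ell\geq 0$,
\[
(k+1)^\alpha(\ell+1)^\alpha\leq \tfrac{1}{2}\big[(k+1)^{2\alpha}+(\ell+1)^{2\alpha}\big].
\]
Multiplying by $|a_{k\ell}|^2$ and summing yields $\|\varphi\|^2_{\mathfrak{D}_{(\alpha,\alpha)}}\leq \tfrac{1}{2}\|\varphi\|^2_{\mathfrak{D}_{(2\alpha,0)}\cap\mathfrak{D}_{(0,2\alpha)}}$, so it suffices to check that the right-hand side is finite. To this end I would invoke the membership criterion with parameter $2\alpha$: the hypothesis $\alpha<\tfrac{1}{2}+\tfrac{1}{2K}$ is precisely $2\alpha<1+\tfrac{1}{K}$, which is exactly the condition guaranteeing $\varphi\in\mathfrak{D}_{(2\alpha,0)}\cap\mathfrak{D}_{(0,2\alpha)}$ in Corollary \ref{anisotropicchar}. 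The corollary follows at once. (When $\varphi$ has no singularity, $K=0$, the hypothesis is vacuous, and $\varphi$ lies in every such space, so one may assume $K\geq 1$.)

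The one point requiring care---and the main, if modest, obstacle---is a discrepancy in hypotheses: Corollary \ref{anisotropicchar} is stated for $\varphi$ as in Theorem \ref{membership}, hence under the local square-free assumption on $p$, whereas the present corollary is asserted for an arbitrary RIF. This is reconciled by the remark following Theorem \ref{membership}: the \emph{sufficiency} direction of the membership criterion imposes no restriction on the stable denominator $p$, the square-free condition being needed only for the lower bound in the necessity argument. Since the comparison above uses only an upper bound, we invoke solely the sufficiency half, and so the square-free hypothesis can be dropped. I would state this explicitly to make clear that the conclusion genuinely holds for every RIF of contact order $K$, and not merely for locally square-free ones.
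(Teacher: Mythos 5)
Your proposal is correct and is essentially the paper's own argument: both routes apply the sufficiency half of Theorem \ref{membership} at parameter $2\alpha$ (where, as you rightly note, the locally square-free hypothesis is not needed), identify this via Proposition \ref{claDir} with membership in $\mathfrak{D}_{(2\alpha,0)}\cap\mathfrak{D}_{(0,2\alpha)}$, and then pass to $\mathfrak{D}_{\alpha}$ by the arithmetic--geometric mean inequality. The only cosmetic difference is that you perform the AM--GM comparison on the Taylor-coefficient weights $(k+1)^{\alpha}(\ell+1)^{\alpha}$, whereas the paper's remark after Proposition \ref{claDir} performs it on the integral weights $(1-|z_1|^2)^{1-\alpha'}(1-|z_2|^2)^{1-\alpha'}$; these are equivalent formulations of the same step.
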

Let us briefly comment on these corollaries. The suffiency part of Corollary \ref{anisotropicchar} is known from \cite{PLMS}, but we underline that the necessity part was not proved in that paper. Moreover, Corollary \ref{sufficient0} follows as a corollary from the characterization of $H^{\mathfrak{p}}$-containment of partial derivatives. The interest here lies in the fact that our proof directly uses Dirichlet-type norms. 

Another simple Corollary we obtain is
\begin{corollary} Let $\varphi$ be a RIF. Then $\varphi \in \mathcal{D}_{1,1,1}(\mathbb{D}^2).$
\label{easy}
\end{corollary}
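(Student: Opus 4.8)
The plan is to work directly with the integral form of the norm, exploiting the crucial feature that at $\alpha=1$ the weight $(1-|z_j|^2)^{1-\alpha}$ collapses to $1$, so that $\mathcal{D}_{1,1,1}(\mathbb{D}^2)=\mathcal{D}(\mathbb{D}^2)$ carries the unweighted mixed norm. Since $\varphi$ is inner, $|\varphi^{*}|=1$ almost everywhere on $\mathbb{T}^2$ and hence $\|\varphi\|_{H^2(\mathbb{D}^2)}^2=1$; it therefore suffices to bound the two mixed integral terms, and by the symmetry $z_1\leftrightarrow z_2$ it is enough to treat the first.

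The key observation is a uniform-in-slice bound on the planar Dirichlet integral. Fix $z_2\in\mathbb{D}$ and consider the slice $f(z_1)=\varphi(z_1,z_2)=\tilde{p}(z_1,z_2)/p(z_1,z_2)$. Since $p$ is stable, $p(z_1,z_2)\neq 0$ for $(z_1,z_2)\in\mathbb{D}^2$, so $f$ has no poles in $\mathbb{D}$ and is a rational function of $z_1$ of degree at most $d_1:=\deg_{z_1}p$; moreover $f(\mathbb{D})\subseteq\overline{\mathbb{D}}$ because $\|\varphi\|_{\infty}\leq 1$. Applying the area-with-multiplicity formula for holomorphic functions,
$$\int_{\mathbb{D}}|f'(z_1)|^2\,dA(z_1)=\int_{\mathbb{C}}n_f(w)\,dA(w),$$
where $n_f(w)$ counts, with multiplicity, the solutions in $\mathbb{D}$ of $f(z_1)=w$. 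As $f$ takes values in $\overline{\mathbb{D}}$ the integrand vanishes for $|w|>1$, and as a rational map of degree at most $d_1$ the function $f$ has at most $d_1$ preimages over each value, whence $\int_{\mathbb{D}}|\partial_{z_1}\varphi(z_1,z_2)|^2\,dA(z_1)\leq d_1$ for every $z_2\in\mathbb{D}$.

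With this uniform bound in hand the first mixed term is immediate: for each $r<1$,
$$\int_{\mathbb{T}}\int_{\mathbb{D}}\left|\frac{\partial\varphi}{\partial z_1}(z_1,re^{i\theta})\right|^2\,dA(z_1)\,d\theta\leq\int_{\mathbb{T}}d_1\,d\theta=2\pi d_1,$$
so the supremum over $r<1$ is finite; the second term is bounded by $2\pi d_2$ with $d_2:=\deg_{z_2}p$ in the same way. Adding the three contributions gives $\|\varphi\|_{\mathcal{D}_{1,1,1}(\mathbb{D}^2)}^2\leq 1+2\pi(d_1+d_2)<\infty$, as desired. Note that, unlike Theorem \ref{membership}, this argument requires no local square-free hypothesis and no appeal to contact order, which is why the containment holds for \emph{every} RIF.

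The main point requiring care is the justification of the degree and multiplicity count for the slice: one must confirm that $z_1\mapsto\varphi(z_1,z_2)$ is genuinely a rational self-map of $\mathbb{D}$ of degree at most $d_1$ with no poles inside $\mathbb{D}$, which rests precisely on the stability of $p$, and that the area formula applies with $f(\mathbb{D})\subseteq\overline{\mathbb{D}}$ so that only the mass over $\mathbb{D}$ contributes. These verifications are routine given stability, which is consistent with the elementary nature of the statement.
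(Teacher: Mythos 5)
Your proof is correct, and it takes a genuinely different route from the paper. The paper obtains this corollary as an immediate byproduct of the sufficiency half of Theorem \ref{membership}: since the contact order $K$ of any RIF is a finite (even) integer, one always has $1<1+\frac{1}{K}$, and the sufficiency argument (slice Blaschke products, the derivative formula from \cite[Lemma 4.2]{PLMS}, Rudin--Forelli estimates, and the asymptotics $\min_j(1-|a_j(\zeta_2)|)\approx|\tau_2-\zeta_2|^K$) then yields membership for $\alpha=1$; no square-free hypothesis is needed because sufficiency never invokes Lemma \ref{estimate}. You instead bypass contact order entirely: the unweighted slice Dirichlet integral is controlled by the classical area-with-multiplicity formula, $\int_{\mathbb{D}}|f'|^2\,dA=\int_{\mathbb{C}}n_f(w)\,dA(w)\le d_1$, valid because each slice $z_1\mapsto\varphi(z_1,z_2)$ is, by stability of $p$, a pole-free rational map of $\mathbb{D}$ into $\overline{\mathbb{D}}$ of uniformly bounded degree. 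This is the inequality form of the fact that a degree-$n$ Blaschke product has Dirichlet integral exactly $n$, and it gives an explicit bound $\|\varphi\|^2_{\mathcal{D}_{1,1,1}}\lesssim 1+d_1+d_2$ that is uniform over all slices, so the supremum over $r<1$ is handled trivially (the paper needs its Lemma 4.1 for this step; you do not). What each approach buys: the paper's route exhibits the corollary as the endpoint $\alpha=1$ of the sharp scale $\alpha<1+\frac{1}{K}$, while yours is more elementary, self-contained, and quantitative, and makes transparent why no geometric hypothesis on the singularities can possibly be needed at $\alpha=1$. One small imprecision: for a general RIF $\varphi=e^{ia}z_1^{N_1}z_2^{N_2}\tilde{p}/p$ the slice degree is bounded by $N_1+\deg_{z_1}p$ rather than $\deg_{z_1}p$; this does not affect the argument, since any finite bound uniform in $z_2$ suffices.
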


In other words, all RIFs belong to the \textit{mixed norm Dirichlet space} $\mathcal{D}_{1,1,1}(\mathbb{D}^2),$ whether they have singularities or not, and therefore, all RIFs belong to $\mathfrak{D}_{1/2}(\mathbb{D}^2).$ 

The main difficulty one faces when investigating membership of a RIF in $\mathfrak{D}_{\alpha}$ is the presence of mixed partials. This leads to the question of what happens with the membership of RIFs on the higher-order Dirichlet-type spaces that we defined above. In this direction, we obtain the following result, which showcases an interesting phenomenon:
\begin{theorem} Let $p\in\mathbb{C}[z_1,z_2]$ be of bidegree $(1,1),$ stable on $\mathbb{D}^2$ and vanishing only on $\tau\in\mathbb{T}^2.$ Let also $\varphi=\frac{\widetilde{p}}{p}.$ Then, $\varphi\in \mathcal{D}_{n,m}(\mathbb{D}^2)$ if and only if $\max(m,n)<2.$
Moreover, the condition $n<2$ is necessary whenever $p$ is of bidegree $(1,k)$ and $m<2$ is necessary whenever $p$ is of bidegree $(k,1)$, for integers $k\ge1.$
\label{derivativedirichlet}
\end{theorem}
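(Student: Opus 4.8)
The plan is to work with the integral form of the $\mathcal{D}_{n,m}$-norm and, exploiting the mixed-norm structure, to reduce the analysis in each variable to a one-variable Blaschke product computation. The sufficiency direction is essentially free: since $m,n$ are positive integers, the condition $\max(m,n)<2$ forces $m=n=1$, and the unweighted first-order space $\mathcal{D}_{1,1}$ coincides with $\mathcal{D}_{1,1,1}$, so every RIF --- in particular $\varphi$ --- belongs to it by Corollary \ref{easy}. Thus the entire content of the theorem lies in the necessity, namely that $\max(m,n)\geq 2$ forces $\varphi\notin\mathcal{D}_{n,m}$. I would prove the contrapositive by exhibiting a divergent lower bound for the corresponding slice integral. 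Passing from the $\sup_{r<1}$ in the norm to the boundary slices is routine via Fatou's lemma, so it suffices to show that one of the two boundary integrals, say $\int_{\mathbb{T}}\int_{\mathbb{D}}|\partial_{z_1}^{M}\varphi(z_1,\zeta_2)|^2\,dA(z_1)\,|d\zeta_2|$, is infinite, where $M=\max(m,n)\geq 2$ and, using the symmetry of the bidegree $(1,1)$ hypothesis, I assume this maximum is the order of the $z_1$-derivative.

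For the bidegree $(1,1)$ case, for almost every $\zeta_2\in\mathbb{T}$ the slice $z_1\mapsto\varphi(z_1,\zeta_2)$ is, up to a unimodular constant and a harmless monomial factor, a single Blaschke factor $b_{a(\zeta_2)}(z_1)=\frac{z_1-a(\zeta_2)}{1-\overline{a(\zeta_2)}z_1}$, whose zero $a(\zeta_2)$ is the $z_1$-zero of $\widetilde{p}(\cdot,\zeta_2)$ inside $\mathbb{D}$. A direct computation, which is the content of Lemma \ref{estimate}, gives $\int_{\mathbb{D}}|\partial_{z_1}^{M}b_a|^2\,dA\asymp(1-|a|^2)^{2-2M}$, which blows up as $|a|\to 1$ precisely when $M\geq 2$. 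It remains to track the rate at which $a(\zeta_2)$ approaches $\mathbb{T}$ as $\zeta_2$ tends to the singular point $\tau_2$. Since $p$ has degree one in $z_1$, the zero $a(\zeta_2)$ is a M\"obius function of $\zeta_2$, and the hypothesis that $p$ vanishes at the single point $\tau\in\mathbb{T}^2$ forces the image of $\mathbb{T}$ under this map to be a circle internally tangent to $\mathbb{T}$ at $\tau_1$, i.e. tangency of order exactly two; hence $1-|a(\zeta_2)|^2\asymp|\zeta_2-\tau_2|^2$ and the contact order is $K=2$. Writing $\zeta_2=\tau_2 e^{it}$, the boundary integral is then comparable to $\int_0^{\varepsilon}t^{2(2-2M)}\,dt$, which diverges for every $M\geq 2$ because the exponent is at most $-4$. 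This yields the necessity, and together with the sufficiency gives the claimed equivalence.

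For the ``moreover'' statement I would treat the two coordinate directions of a bidegree $(1,k)$ polynomial separately. In the variable of degree one the argument above applies verbatim: the slice is a single Blaschke factor, the zero approaches $\mathbb{T}$ at some finite polynomial contact order $K<\infty$, and the bound $\int t^{K(2-2M)}\,dt$ diverges whenever $M\geq 2$ since the exponent is then at most $-2K\leq -2$; this rules out derivative order $\geq 2$ in that variable without needing the precise value of $K$. The degree-$k$ direction is where the real difficulty lies. There the slice $z_2\mapsto\varphi(\zeta_1,z_2)$ is a Blaschke product of degree $k$, and to force divergence of $\int_{\mathbb{T}}\int_{\mathbb{D}}|\partial_{z_2}^{n}\varphi(\zeta_1,z_2)|^2$ for $n\geq 2$ I would isolate, for each $\zeta_1$ near $\tau_1$, the factor whose zero $a_{j_0}(\zeta_1)$ is closest to $\mathbb{T}$ and show that near the corresponding boundary point the term of $\partial_{z_2}^{n}$ in which all derivatives fall on that factor dominates, giving $\int_{\mathbb{D}}|\partial_{z_2}^{n}\varphi(\zeta_1,\cdot)|^2\gtrsim(1-|a_{j_0}(\zeta_1)|^2)^{2-2n}$, after which integration in $\zeta_1$ against the contact-order rate produces the divergence. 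The main obstacle is precisely this lower bound: without a local square-free hypothesis the zeros $a_j(\zeta_1)$ may collide as $\zeta_1\to\tau_1$, and one must rule out cancellation among the factors in the product rule for $\partial_{z_2}^{n}$. I expect this to be controllable by choosing the direction of approach of $\zeta_1$ to $\tau_1$ so that a single zero genuinely dominates, or by noting that a cluster of colliding zeros only increases the order of the singularity of the derivative, but this is the step that requires the most care.
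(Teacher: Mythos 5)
Your treatment of the bidegree $(1,1)$ equivalence is correct and is essentially the paper's own argument: pass from the $\sup_{r<1}$ to boundary slices, note that each slice in the degree-one variable is a single Blaschke factor, compute $\int_{\mathbb{D}}|\partial_{z_1}^{M}b_{a(\zeta_2)}|^2\,dA \approx (1-|a(\zeta_2)|^2)^{2-2M}$, and integrate this against the rate at which $a(\zeta_2)$ approaches $\mathbb{T}$. Two minor corrections: this derivative computation is \emph{not} the content of Lemma \ref{estimate} (that lemma is a point-evaluation lower bound for \emph{first} derivatives of Blaschke products with distinct zeros); what is actually used, and what the paper uses, is the explicit formula for $\partial_{z_1}^{M}$ of a single Blaschke factor combined with the Rudin--Forelli estimates. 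Also, your M\"obius-tangency argument that the contact order of a bidegree $(1,1)$ singularity is exactly $K=2$ is correct but not needed: the paper keeps $K$ abstract, quotes $1-|a(\zeta_2)|\approx|\tau_2-\zeta_2|^{K}$ from \cite{PLMS}, and only uses that the resulting exponent $K(2-2n)$ is $\le -2$ once $n\ge 2$ (equivalently, convergence iff $n<1+\tfrac{1}{2K}$), so your explicit $K=2$ is a self-contained bonus rather than a divergence from the method.

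The genuine issue is the ``moreover'' clause, and you have diagnosed it accurately: your argument (a single-Blaschke-factor slice estimate) applies only to the derivative taken in the variable where $p$ has degree one; in the degree-$k$ variable the slices are degree-$k$ Blaschke products, and you explicitly leave open the lower bound $\int_{\mathbb{D}}|\partial_{z_2}^{n}\varphi(\zeta_1,\cdot)|^2\,dA \gtrsim (1-|a_{j_0}(\zeta_1)|^2)^{2-2n}$ because of possible cancellation among Leibniz terms when zeros collide. So, as written, your proposal proves ``the derivative order in the degree-one variable must be $<2$'' but not the full necessity of $\max(m,n)<2$ for bidegrees $(1,k)$, $(k,1)$. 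You should know, however, that the paper's own proof of this clause is no more complete: it consists of the single assertion that ``the above estimates reveal that $n<2$ and $m<2$,'' and those estimates apply verbatim only in the degree-one direction. In other words, the obstacle you flag is real, it is exactly the step the paper glosses over, and closing it would require the kind of dominant-zero/no-cancellation analysis you sketch (or an appeal to the higher-order $H^{\mathfrak{p}}$ derivative results of \cite{PLMS}), neither of which appears in the paper.
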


\subsection{Structure of the paper } Section \ref{Section 3} contains a brief review of concepts and results we use in our proofs, which are in turn given in Section \ref{section 4}. Section \ref{section 5}  collects some miscellaneous material, including two-variable weighted Douglas formulas and results that relate Dirichlet-space membership to properties of Agler decompositions. Finally, in Section \ref{section 6} we present several examples that illustrate our results.
\subsection{Notation} For the entirety of the paper, we shall use the notation $A\approx B$ for inequalities of the form $C_1B\leq A\leq C_2B $ for some positive constants $C_1,C_2>0.$ Moreover, we denote by $A\lesssim B$ inequalities of the form $A\leq CB,$ for some positive constant $C>0.$
\section{Contact order, factorization theorems, and past results} \label{Section 3}
Before we discuss the proofs of the main results, we review some background and tools used in the study of RIFs. Let us first discuss the concept of \textit{contact order}. We refer the reader to \cite{PLMS,Polonici} for in-depth treatments. Consider a rational inner function $\varphi(z_1,z_2)=\widetilde{p}(z_1,z_2)/p(z_1,z_2),$ and assume that this function has a singularity $(\tau_1,\tau_2)\in\mathbb{T}^2.$ In other words, we assume that $p(\tau_1,\tau_2)=0$ and then necessarily also $\tilde{p}(\tau_1,\tau_2)=0$. 

For each $\zeta_2\in\mathbb{T},$ the function \[\mathbb{D}\ni z_1\mapsto \varphi_{\zeta_2}(z_1)\] is a finite Blaschke product, with discrete zero set $\mathcal{Z}= \{a_1(\zeta_2),...,a_N(\zeta_2)\}\subset \mathbb{D}.$ Note that $N\leq \deg_{z_1}p $.
The same holds if we fix the $z_1$-variable on the circle $\mathbb{T}.$ As is explained in \cite{PLMS}, the distance of the zero set of $\phi_{\zeta_2}$ to the unit circle $\mathbb{T}$ is controlled by $\min_{1\leq j\leq N}\{1-|a_j(\zeta_2)|\}.$ As $\zeta_2 \to \tau_2 \in \mathbb{T}$, some of the zeros of the finite Blaschke product $\phi_{\zeta_2}$ tend to $\tau_1$. A natural question is at what ``speed" this phenomenon occurs. The answer to this question is given in the following theorem.
\begin{theorem}(Theorem 3.5. of \cite{PLMS}) 
\label{def:loccont}
Let
$\varphi=\frac{\tilde p}{p}$ 
be a rational inner function on $\mathbb{D}^2$ with $\deg \varphi=(m,n)$ and a
singularity on $\mathbb{T}^2$ with $z_2$-coordinate $\tau_2$. Then there exists a
rational number $K>0$ such that
$$\epsilon(\varphi,\zeta_2)=\min_{1\leq j\leq n}\{1-|a_j(\zeta_2)|\}\approx |\tau_2-\zeta_2|^K,$$
for all $\zeta_2\in\mathbb{T}$ in a neighborhood $U$ of $\tau_2$. The number $K$ is
called the $(z_1,\tau_2)$-contact order of $\varphi$.
\end{theorem}

This result allows one to formally define the contact order of a RIF, even in the case of multiple singularities. More concretely,

\begin{definition}(Definition 3.6 of \cite{PLMS}) Let $\varphi=\frac{\tilde p}{p}$ be a rational inner function on $\mathbb{D}^2$ with $\deg \varphi=(m,n)$ and let
$\tau_{2,1},\ldots,\tau_{2,J}$ denote the distinct $z_2$-coordinates of the singularities of
$\varphi$. Then the $z_1$-contact order of $\varphi$ is the number $K^1$ defined by

$$K^1:=\max\bigl\{(z_1,\tau_{2,j})\text{-contact order of }\varphi : 1\le j\le J\bigr\}.$$
\end{definition}

The definition of the $z_2-$contact order $K^2$ of $\varphi$ is set up in an analogous manner.
The papers \cite{Pisa,Polonici}, among others, present more systematic studies of contact order. One basic 
result is that the $z_1$- and $z_2$-contact orders are
equal, meaning that it makes sense to speak of the {\it contact order} $K=K^1=K^2$ of $\varphi$.
Another important property of the contact order $K$ is that it is an even integer, $K=2L$ for some $L\in \mathbb{N}$. Very roughly speaking, these result are consequences of Puiseux factorization theorems for stable polynomials. A version of such a Puiseux factorization result is presented in \cite{Polonici} in the setting of the bi-upper halfplane $\mathbb{H}^2=\mathbb{H}\times \mathbb{H}=\{(z_1,z_2)\in \mathbb{C}^2\colon \Im z_j>0, \, j=1,2\}.$ Note that any stable polynomial in $\mathbb{D}^2$ can be converted to a stable polynomial in $\mathbb{H}^2$ and vice versa via coordinatewise Cayley transforms 
\[\beta \colon \mathbb{D}\to \mathbb{H}, \quad \beta(z_j)=\frac{1+iz_j}{1-iz_j}, \quad j=1,2,\]
and their inverses.
Specifically, if $p\in \mathbb{C}[z_1,z_2]$ is stable in $\mathbb{D}^2$ and of bidegree $(m,n)$, then the polynomial
\[(1-iz_1)^m(1-iz_2)^np\left(\frac{1+iz_1}{1-iz_1},\frac{1+iz_2}{1-iz_2} \right)\]
is stable in $\mathbb{H}$. See \cite{Pisa,Polonici} for details, including the fact that contact order is invariant under Cayley transforms.

Let us state here the specific result which is Theorem 1.2. of the mentioned paper.
\begin{theorem}(Theorem 1.2. of \cite{Polonici}) Let $p(z_1,z_2)\in\mathbb{C}[z_1,z_2]$ have no zeros on $\mathbb{H}^2$ and no common factors with $\overline{p}.$ Suppose $p$ vanishes to order $M$ at $(0,0).$ Then, there exists natural numbers $L_1,...,L_M\ge 1$ and real coefficient polynomials $q_1(x),...,q_M(x)\in\mathbb{R}[x]$ satisfying 
\begin{itemize}
\item $q_j(0)=0$ 
\item $q_j'(0)>0$
\item $\deg q_j<2L_j$
\end{itemize}
for $j=1,...,M$ such that if we define
$$[p](z_1,z_2)=\prod_{j=1}^M(z_1+q_j(z_2)+iz_2^{2L_j}),$$
then $p/[p]$ and $[p]/p$ are bounded in a punctured neighborhood of $(0,0)$ in $\mathbb{R}^2.$
\label{KneseLocal}
\end{theorem}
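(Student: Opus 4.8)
The plan is to read off the model factorization from a Newton--Puiseux description of the local zero set of $p$, with the special shape of the factors forced by the hypothesis that $p$ does not vanish on $\mathbb{H}^2$. First I would arrange that $z_1$ is a regular direction, so that $p(z_1,0)$ vanishes to order exactly $M$ at $z_1=0$; if this fails, a shear $z_1\mapsto z_1+cz_2$ with $c>0$ preserves $\mathbb{H}^2$ near the origin and can be used to achieve it. The Weierstrass preparation theorem then yields $p=u\cdot W$, where $u$ is holomorphic and non-vanishing near $(0,0)$ and $W(z_1,z_2)=\prod_{j=1}^{M}\bigl(z_1-\phi_j(z_2)\bigr)$ is monic of degree $M$ in $z_1$; by the Newton--Puiseux theorem the roots $\phi_j$ are convergent Puiseux series in $z_2$ with $\phi_j(0)=0$.

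The heart of the argument is the use of stability to constrain these branches. Since $W$ inherits the property of not vanishing on $\mathbb{H}^2$, each branch must satisfy $\operatorname{Im}\phi_j(z_2)\le 0$ for real $z_2$ near $0$, while the assumption that $p$ has no common factor with $\overline{p}$ rules out branches with $\operatorname{Im}\phi_j\equiv 0$. Writing the leading imaginary behavior as $\operatorname{Im}\phi_j(z_2)\sim -a_j\,z_2^{m_j}$, the requirement that this be nonpositive for \emph{both} signs of small real $z_2$ forces $m_j$ to be even, say $m_j=2L_j$, with $a_j>0$; this is precisely the source of the even contact order. The real part of $\phi_j$ is an honest real series, and I would define $q_j$ to be minus its truncation to orders below $2L_j$, so that $q_j(0)=0$ and $\deg q_j<2L_j$. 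The remaining normalization $q_j'(0)>0$ reflects a Julia--Carath\'eodory/Herglotz positivity carried by the tangent direction of a boundary zero of a stable polynomial.

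With $[p]=\prod_j\bigl(z_1+q_j(z_2)+iz_2^{2L_j}\bigr)$ in hand, I would establish boundedness of $p/[p]$ and $[p]/p$ on a punctured real neighborhood of $(0,0)$ by matching the factors. For real $(z_1,z_2)$ set $R_j=z_1+q_j(z_2)$; then $|z_1+q_j+iz_2^{2L_j}|\approx |R_j|+|z_2|^{2L_j}$, since the modulus squared equals $R_j^2+z_2^{4L_j}$. On the other hand $z_1-\phi_j=R_j-\bigl(\operatorname{Re}\phi_j+q_j\bigr)-i\operatorname{Im}\phi_j$, where by construction $\operatorname{Re}\phi_j+q_j=O(|z_2|^{2L_j})$ and $\operatorname{Im}\phi_j\approx -a_j|z_2|^{2L_j}$, so that $|z_1-\phi_j|\approx |R_j|+|z_2|^{2L_j}$ as well. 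The additive term $|z_2|^{2L_j}$ absorbs the $O(|z_2|^{2L_j})$ discrepancy in the real parts and prevents cancellation in both directions, giving the two-sided comparison factor by factor; multiplying over $j$ and using that $u$ is bounded above and below then yields the claim.

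I expect the delicate points to be two. The first is the even-order and sign analysis, which is exactly where stability and the no-common-factor hypothesis are indispensable and where the oddness of the exponent is excluded. The second is making the comparability of the previous paragraph uniform in the presence of ramified Puiseux branches: there the real part of $\phi_j$ and the anisotropic balance $z_1\sim z_2^{2L_j}$ must be tracked through fractional exponents, and conjugate branches must be grouped appropriately so that the symmetric functions producing the coefficients of $W$ remain single-valued and the pairing with the integer-power model factors is legitimate.
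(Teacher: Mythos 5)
The paper does not actually prove this statement: it imports it verbatim as Theorem~1.2 of \cite{Polonici}, and Section~3 only records the underlying branch structure, namely that the zero set of $p$ near $(0,0)$ consists of branches $y_j(z_2)=q_j(z_2)+z_2^{2L_j}\Psi_j(z_2^{1/M_j})$ with $\Psi_j$ analytic and $\mathrm{Im}\,\Psi_j(0)>0$. Your skeleton (Weierstrass preparation plus Newton--Puiseux, stability constraining the branches, then a factorwise two-sided comparison) does match the architecture of the proof in \cite{Polonici}, and your final step is sound: once the branch structure is known, $|z_1+q_j(z_2)+iz_2^{2L_j}|^2=R_j^2+z_2^{4L_j}$ and $|z_1-\phi_j|\approx|R_j|+|z_2|^{2L_j}$ do give the two-sided bound factor by factor.

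However, there are genuine gaps precisely at the crux. First, the branch structure itself --- that below order $2L_j$ the Puiseux expansion consists of \emph{integer} powers with \emph{real} coefficients, and that the first non-real term occurs at an even integer order with strictly positive imaginary part --- is the technical heart of the cited theorem, and you assume it (``the real part of $\phi_j$ is an honest real series,'' ``$\mathrm{Im}\,\phi_j\sim-a_jz_2^{m_j}$'') rather than derive it. Your both-signs-of-real-$z_2$ parity argument works only for an unramified branch with a single leading power: for ramified branches the restrictions of $\mathrm{Im}\,\phi_j$ to the two sides of the real axis can have different leading exponents and coefficients, the deck transformations $z_2^{1/m}\mapsto\zeta z_2^{1/m}$ permute the branches, and one must rule out real fractional terms appearing before the first imaginary one; you flag this as ``delicate'' but it is where the actual work in \cite{Polonici} lies. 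Second, $q_j'(0)>0$ cannot be extracted from the real-axis sign condition alone: the branch $\phi(z_2)=z_2-iz_2^2$ has $\mathrm{Im}\,\phi(x)=-x^2\le0$ for all real $x$, yet taking $z_2=iy$ with $y>0$ gives $\mathrm{Im}\,\phi(iy)=y+y^2>0$, so $z_1-z_2+iz_2^2$ is not stable; strict positivity of the slopes requires using stability at interior points, e.g.\ via the fact that the lowest homogeneous term factors as $P_M(z_1,z_2)=c\prod_{j=1}^{M}(z_1+d_jz_2)$ with $d_j>0$, and your appeal to ``Julia--Carath\'eodory/Herglotz positivity'' is a pointer, not an argument. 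Third, a concrete step fails as written: the shear $z_1\mapsto z_1+cz_2$ does not change the coefficient of $z_1^M$ in the lowest homogeneous part (the $z_1^M$-term of $P_M(z_1+cz_2,z_2)$ is the original one), so it cannot regularize the $z_1$-direction; the regularizing shear would be $z_2\mapsto z_2+cz_1$ with $c>0$ --- though in fact no shear is needed, since the tangent-cone factorization above already shows $p(z_1,0)$ vanishes to order exactly $M$, which is also what guarantees exactly $M$ factors monic in $z_1$.
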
 Note that $K_0=\max\{2L_1, \cdots, 2L_M\}$ gives the $(0,0)$-contact order of $\varphi$.
As before then, $K=\{K_{\tau}\colon \tau \in \mathcal{Z}_{p}\cap \mathbb{R}^2\}$ is the global contact order, or simply \textit{contact order}. Note that the polynomial $[p]$ as defined above is uniquely determined by $p$ (since it arises from a truncation of the full Puiseux expansions of $p$).

The result that connects the contact order as an algebro-geometric quantity and the membership of partial derivatives of RIFs on the Hardy spaces $H^\mathfrak{p}(\mathbb{D}^2)$ is the following.

\begin{theorem} (Theorem 4.1. of \cite{PLMS}) 
Let $\varphi=\frac{\widetilde{p}}{p}$ be a RIF. Then $\frac{\partial\varphi}{\partial z_j}, j=1,2,$ belong to $H^\mathfrak{p}(\mathbb{D}^2)$ if and only if $\mathfrak{p}<1+\frac{1}{K}.$
\label{HardyMembership}
\end{theorem}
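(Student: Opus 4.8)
The plan is to reduce the two-variable $H^{\mathfrak p}$ question to a family of one-variable estimates for derivatives of finite Blaschke products, and then feed in the contact order estimate. First I would observe that, since $|g|^{\mathfrak p}$ is subharmonic for $g$ holomorphic, the dilated means $\int_{\mathbb T^2}|g(r\zeta)|^{\mathfrak p}\,dm$ increase in $r$, so membership of $\partial\varphi/\partial z_j$ in $H^{\mathfrak p}(\mathbb D^2)$ is equivalent to finiteness of the boundary integral $\int_{\mathbb T^2}|\partial_{z_j}\varphi^*|^{\mathfrak p}\,dm$. By Knese's theorem $\varphi$ extends real-analytically across every non-singular boundary point, so the only obstruction to integrability sits near the singular set, and it suffices to control $\int_U|\partial_{z_j}\varphi|^{\mathfrak p}\,dm$ over a small neighborhood $U$ of a fixed singularity $\tau=(\tau_1,\tau_2)$. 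I would treat $j=1$, the case $j=2$ being symmetric (slicing in $z_1$ and invoking the contact order estimate for the approach $\zeta_1\to\tau_1$, which yields the same exponent $K$). Slicing, for fixed $\zeta_2\in\mathbb T$ the map $z_1\mapsto\varphi_{\zeta_2}(z_1)$ is a finite Blaschke product with zeros $a_1(\zeta_2),\dots,a_N(\zeta_2)$, $N\le\deg_{z_1}p$, and $\partial_{z_1}\varphi(\zeta_1,\zeta_2)=\varphi_{\zeta_2}'(\zeta_1)$. The decisive one-variable identity is that on $\mathbb T$,
\[
|\varphi_{\zeta_2}'(\zeta_1)|=\sum_{k=1}^N\frac{1-|a_k(\zeta_2)|^2}{|\zeta_1-a_k(\zeta_2)|^2},
\]
a sum of positive Poisson kernels; this positivity is the structural fact that drives both directions.

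For sufficiency, suppose $\mathfrak p<1+\frac{1}{K}$. Using $\big(\sum_k x_k\big)^{\mathfrak p}\le N^{\mathfrak p-1}\sum_k x_k^{\mathfrak p}$ together with the single-kernel bound $\int_{\mathbb T}\big((1-|a|^2)/|\zeta_1-a|^2\big)^{\mathfrak p}\,dm(\zeta_1)\approx(1-|a|)^{1-\mathfrak p}$ (valid for $\mathfrak p>\tfrac12$), the inner integral satisfies
\[
\int_{\mathbb T}|\varphi_{\zeta_2}'(\zeta_1)|^{\mathfrak p}\,dm(\zeta_1)\lesssim\sum_{k=1}^N\big(1-|a_k(\zeta_2)|\big)^{1-\mathfrak p}.
\]
Since each $1-|a_k(\zeta_2)|\ge\min_k(1-|a_k(\zeta_2)|)\approx|\zeta_2-\tau_2|^K$ by the contact order estimate (Theorem 3.5 of \cite{PLMS}), since $1-\mathfrak p<0$, and since $N\le\deg_{z_1}p$ is bounded, this is $\lesssim|\zeta_2-\tau_2|^{K(1-\mathfrak p)}$. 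Integrating in $\zeta_2$ over $U$ gives a finite bound exactly because $K(1-\mathfrak p)>-1$, i.e. $\mathfrak p<1+\frac{1}{K}$.

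For necessity, suppose $\mathfrak p\ge1+\frac{1}{K}$. Here positivity pays off directly: discarding all but the extremal zero $a_{\min}(\zeta_2)$ realizing $\min_k(1-|a_k|)$ yields the pointwise lower bound $|\varphi_{\zeta_2}'|\ge(1-|a_{\min}|^2)/|\zeta_1-a_{\min}|^2$, whence
\[
\int_{\mathbb T}|\varphi_{\zeta_2}'(\zeta_1)|^{\mathfrak p}\,dm(\zeta_1)\gtrsim\big(1-|a_{\min}(\zeta_2)|\big)^{1-\mathfrak p}\approx|\zeta_2-\tau_2|^{K(1-\mathfrak p)}.
\]
Integrating in $\zeta_2$ now diverges, since $K(1-\mathfrak p)\le-1$, so $\partial_{z_1}\varphi\notin H^{\mathfrak p}$.

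The main obstacle is the passage from the single-kernel estimate to the full sum in the sufficiency direction: one must control several zeros $a_k(\zeta_2)$ that may simultaneously approach $\mathbb T$ and cluster, and ensure the implied constants are uniform as $\zeta_2$ ranges over the neighborhood of $\tau_2$. The crude convexity bound $(\sum x_k)^{\mathfrak p}\le N^{\mathfrak p-1}\sum x_k^{\mathfrak p}$ works here only because $N$ is bounded by $\deg_{z_1}p$; a secondary point needing care is that the lower bound $1-|a_k(\zeta_2)|\gtrsim|\zeta_2-\tau_2|^K$ should hold for \emph{all} slice zeros after localization, which is immediate from $1-|a_k|\ge1-|a_{\min}|$. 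By contrast, the necessity direction is comparatively painless, precisely because the positivity of the Poisson-kernel sum lets us retain a single term with no cancellation — the very feature that is unavailable in the area-integral Dirichlet setting and that forces a square-free hypothesis there.
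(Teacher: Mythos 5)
The paper does not actually prove this statement---it is imported verbatim as Theorem 4.1 of \cite{PLMS}---and your proposal correctly reconstructs essentially that original argument: slicing to finite Blaschke products, the unimodular-boundary identity expressing $|\varphi_{\zeta_2}'|$ as a positive sum of Poisson-type kernels (cf.\ \cite[Lemma 4.2]{PLMS}), single-kernel $L^{\mathfrak{p}}$ asymptotics combined with the contact-order estimate for sufficiency, and retention of the extremal kernel for necessity; your closing remark also matches the present paper's own explanation of why its Theorem \ref{membership} requires the locally square-free hypothesis while this boundary result does not. Two points you gloss are genuinely substantive rather than trivial, though both are handled in \cite{PLMS}: first, the equivalence of $H^{\mathfrak{p}}$-membership with finiteness of the boundary integral (monotone dilated means and Fatou give one inequality, but the converse needs the Smirnov-class/subharmonicity structure of the rational function $Q/p^{2}$, using that the zeros of a stable $p$ on $\overline{\mathbb{D}^2}$ lie in $\mathbb{T}^2$); and second, the fact that the two coordinate directions yield the same contact order $K$ for a two-variable RIF, which is a theorem in \cite{PLMS} (and fails in higher dimensions) and is what legitimizes your one-line ``symmetric'' treatment of $j=2$.
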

The proof of the said necessity part is independent of the proof of Theorem \ref{HardyMembership}. At its core, the proof utilizes the hyperbolic pseudometric $$\rho_{\mathbb{D}}(z,w)=\left|\frac{z-w}{1-\overline{w}z}\right|$$ where $z,w\in\mathbb{D}$ (and its counterpart on the upper half-plane $\mathbb{H}=\{z\in \mathbb{C}:\mathrm{Im}z>0\}$ and the local boundedness Theorem \ref{KneseLocal} we stated above.
The factorization of $p$ yields factors of the form $y_j(z_2)=q(z_2)+z_2^{2L_j}\Psi_j(z_2^{1/M_j})$ for $\Psi$ an analytic function with $\mathrm{Im}(\Psi_j(0))>0.$ In our situation it suffices to use the local model provided by $[p]$.
\section{Proofs of the main results} \label{section 4} In the present section the proofs of the main results will be presented.  First, we prove Theorem \ref{membership}. Then, we prove Proposition \ref{claDir}. This Proposition will allow us to obtain Corollaries \ref{sufficient0} and \ref{anisotropicchar}. We finish this Section with the proof of Theorem \ref{derivativedirichlet}.

Let us begin with the proof of Theorem \ref{membership}.
\begin{proof}[Proof of Theorem \ref{membership}] First, we prove the sufficiency. Before we proceed, the following Lemma is required. We state it for $n-$th order derivatives for convenience, as we will apply it to both of our proofs.
\begin{lemma} For $z_1\in\mathbb{D}, \varphi=\frac{\tilde{p}}{p}$ a RIF, and $n\in\mathbb{N}$ the following holds
$$\sup_{r<1}\frac{1}{2\pi}\int_{-\pi}^{\pi}\left|\frac{\partial^n\varphi}{\partial z_1^n}(z_1,re^{i\theta})\right|^2d\theta=\frac{1}{2\pi}\int_{-\pi}^{\pi}\left|\frac{\partial^n\varphi}{\partial z_1^n}(z_1,e^{i\theta})\right|^2d\theta.$$
\end{lemma}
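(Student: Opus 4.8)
The plan is to fix $z_1 \in \mathbb{D}$ and regard $g(z_2) := \frac{\partial^n \varphi}{\partial z_1^n}(z_1,z_2)$ as a holomorphic function of the single variable $z_2$, thereby reducing the claimed identity to two classical one-variable facts: monotonicity of $L^2$ integral means in the radius, together with continuity of $g$ up to the boundary circle $\mathbb{T}$.

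First I would record the key regularity statement: for each $z_1 \in \mathbb{D}$ the denominator $p(z_1,\cdot)$ has no zeros on the closed disk $\overline{\mathbb{D}}$. Stability already gives no zeros in $\mathbb{D}$, and a zero at some $\zeta_2^0 \in \mathbb{T}$ with $\tilde{p}(z_1,\zeta_2^0)\neq 0$ would force $|\varphi(z_1, t\zeta_2^0)| \to \infty$ as $t\to 1^-$ along the radius, contradicting $|\varphi|\leq 1$ on $\mathbb{D}^2$; the remaining possibility of a common zero of $p$ and $\tilde{p}$ off $\mathbb{T}^2$ is excluded by the standard property that all zeros of a stable $p$ in $\overline{\mathbb{D}^2}$ lie on the distinguished boundary $\mathbb{T}^2$ (see \cite{Rudin}). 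Consequently $\varphi = \tilde{p}/p$ is holomorphic on a neighborhood of $\{z_1\}\times\overline{\mathbb{D}}$ in $\mathbb{C}^2$, and therefore so is every $z_1$-derivative; in particular $g$ is holomorphic on a neighborhood of $\overline{\mathbb{D}}$, hence continuous on $\overline{\mathbb{D}}$, and its boundary values are the honest restriction $g|_{\mathbb{T}}$, so that the right-hand side of the asserted identity is unambiguously defined.

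With $g$ holomorphic on $\mathbb{D}$, the function $|g|^2$ is subharmonic, so by Hardy's convexity theorem the integral means $I(r) = \frac{1}{2\pi}\int_{-\pi}^{\pi}|g(re^{i\theta})|^2\,d\theta$ are nondecreasing in $r$. Hence the supremum over $r<1$ equals the radial limit $\lim_{r\to 1^-} I(r)$. Since $g$ is continuous on $\overline{\mathbb{D}}$, one has $g(re^{i\theta})\to g(e^{i\theta})$ uniformly in $\theta$ as $r\to 1^-$, so $I(r) \to \frac{1}{2\pi}\int_{-\pi}^{\pi}|g(e^{i\theta})|^2\,d\theta$, which is precisely the right-hand side. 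Combining these two observations yields the stated equality.

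The only genuinely non-routine step is the regularity claim that $p(z_1,\cdot)$ is zero-free on $\overline{\mathbb{D}}$ for interior $z_1$; everything downstream is classical one-variable Hardy-space theory. I expect the main obstacle to be cleanly dispatching the degenerate case in which $p$ and $\tilde{p}$ share a boundary zero, which is handled by invoking the structure theory of stable polynomials rather than the elementary radial blow-up argument.
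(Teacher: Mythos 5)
Your proof is correct and follows essentially the same route as the paper's: fix $z_1\in\mathbb{D}$, view $\frac{\partial^n\varphi}{\partial z_1^n}(z_1,\cdot)$ as a one-variable holomorphic function of $z_2$, and conclude from the classical fact that radial $L^2$-means are nondecreasing and converge to the boundary integral. The only difference is one of rigor: the paper merely cites the $H^2(\mathbb{D})$ mean-value identity after remarking that $p$ is non-vanishing by stability, whereas you additionally justify --- via the standard structure fact that a stable $p$ with no common factor with $\tilde p$ has all its zeros in $\overline{\mathbb{D}^2}$ on $\mathbb{T}^2$ --- that $p(z_1,\cdot)$ is zero-free on all of $\overline{\mathbb{D}}$, so the slice is continuous up to $\mathbb{T}$; this is precisely the point the paper's proof implicitly relies on (holomorphy on $\mathbb{D}$ alone would not give $H^2$ membership) but does not spell out.
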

\begin{proof} For $f\in H^2(\mathbb{D}),$
$$\sup_{r<1}\frac{1}{2\pi}\int_{-\pi}^{\pi}\left|f(re^{i\theta})\right|^2d\theta=\frac{1}{2\pi}\int_{-\pi}^{\pi}\left|f(e^{i\theta})\right|^2d\theta,$$
with the right-hand side interpreted in terms of non-tangential limits.
We observe that $$\frac{\partial^n\varphi}{\partial z_1^n}(z_1,e^{i\theta})=\frac{Q_n(z_1,e^{i\theta})}{p^{n+1}(z_1,e^{i\theta})},$$ and note that the fraction is an analytic function of $\theta$ because the denominator is non-vanishing on $\mathbb{D},$ as a consequence of $p$ being stable. Setting $f(re^{i\theta})=\frac{\partial^n \varphi}{\partial z_1^n}(z_1,re^{i\theta})$ finishes the proof.
\end{proof}
We shall also recall the well-known Rudin-Forelli estimates (see Theorem 1.7 in \cite{HedenmalmBook}) on the disc. For $\beta\in(-1,+\infty)$ and $\gamma\in(-\infty,\infty)$, we have
$$\int_{\mathbb{D}}\frac{(1-|z|^2)^{\beta}}{|1-\overline{w}z|^{2+\beta+\gamma}}dA(z)\approx \begin{cases}1 &, \gamma<0\\
-\log(1-|w|^2) &,\gamma=0\\
(1-|w|^2)^{-\gamma} &,\gamma>0
\end{cases}$$
as $|w|\to 1.$
Now, we are ready to continue with the proof. Due to symmetry of the RIFs, by the previous Lemma and the definition of the $\mathcal{D}_{1,1,\alpha}(\mathbb{D}^2)-$norm, it suffices to estimate from above the integral
$$I(\varphi)=\int_{\mathbb{T}}\int_{\mathbb{D}}\left|\frac{\partial \varphi}{\partial z_1}(z_1,e^{i\theta})\right|^2(1-|z_1|^2)^{1-\alpha}dA(z_1)d\theta.$$
Fixing the $z_2$-variable on the circle, we get 
$\varphi(z_1,\zeta_2)=\varphi_{\zeta_2}(z_1)$ where $\varphi_{\zeta_2}(z_1)$ is a finite Blaschke product in the $z_1-$variable. Therefore, 
$$\frac{\partial \varphi}{\partial z_1}(z_1,\zeta_2)=\varphi'_{\zeta_2}(z_1), \quad z_1\in\mathbb{D}.$$
By the formula for the derivative of a finite Blaschke product (see \cite[Lemma 4.2]{PLMS} for details), we obtain $$\left|\frac{\partial \varphi}{\partial z_1}(z_1,\zeta_2)\right|^2\leq \left|\sum_{j}|b'_{a_j(\zeta_2)}(z_1)|\right|^2\lesssim\sum_{j}|b'_{a_j(\zeta_2)}(z_1)|^2,$$
where $b_{a_j}(z_1)=\frac{z_1-a_j(\zeta_2)}{1-\overline{a_j(\zeta_2)}z_1}$ are the individual Blaschke factors making up $\varphi_{\zeta_2}$.
Note here that the last inequality is justified by applying the simple inequality $(a+b)^2\leq 2(a^2+b^2).$
Hence,
\begin{align}
\int_{\mathbb{T}}\int_{\mathbb{D}}\left|\frac{\partial \varphi}{\partial z_1}(z_1,e^{i\theta})\right|^2dA_{\alpha}(z_1)d\theta\leq& \sum_{j}\int_{\mathbb{T}}\int_\mathbb{D}|b'_{a_j(e^{i\theta})}(z_1)|^2(1-|z_1|^2)^{1-\alpha}dA(z_1)d\theta  &&\\
=&\sum_{j}\int_{\mathbb{T}}\int_{\mathbb{D}}\frac{(1-|a_j(e^{i\theta})|^2)^2(1-|z_1|^2)^{1-\alpha}}{|1-\overline{a_j(e^{i\theta})}z_1|^4} dA(z_1)d\theta \notag \\
=&\sum_{j}\int_{\mathbb{T}}(1-|a_j(e^{i\theta})|^2)^2\left(\int_{\mathbb{D}}\frac{(1-|z_1|^2)^{1-\alpha}}{|1-\overline{a_{j}(e^{i\theta})}z_1|^4}dA(z_1)\right)d\theta \notag \\
\approx&\sum_{j}\int_{\mathbb{T}}(1-|a_{j}(e^{i\theta})|^2)^{2}\cdot \frac{1}{(1-|a_j(e^{i\theta})|^2)^{1+\alpha}}d\theta  \notag 
\end{align}
with the last asymptotic equality coming from the Rudin-Forelli estimates, after choosing $\beta=1-\alpha,$ $\gamma=1+\alpha$ and writing $4=2+\beta+\gamma=2+(1-\alpha)+(1+\alpha).$ Now assume that the singularity of the RIF that realizes its contact order is at the point $\tau=(\tau_1,\tau_2)\in \mathbb{T}^2.$
From the results in \cite{PLMS} (viz. in particular pp. 295-298), it is known that 
$$\int_{\mathbb{T}}\sum_{j}(1-|a_{j}(e^{i\theta})|^2)^{1-\alpha}d\theta\approx\int_{U}|\tau_2-\zeta_2|^{K(1-\alpha)}|d\zeta_2|,$$
where $U$ is a neighborhood of $\tau_2\in\mathbb{T}.$
The integral on the right hand side converges if and only if $K(1-\alpha)>-1,$ where $K$ is the contact order of the RIF. This in turn, implies convergence of the integral provided $\alpha<1+\frac{1}{K}.$ These observations finish the ``sufficiency" part of the proof. 

For the necessity part, we need a lower estimate on the integrals in the $\mathcal{D}_{1,1,\alpha}$ norm that allows us to compare it with 
$$\int_{U}|\tau_2-\zeta_2|^{K(1-\alpha)}|d\zeta_2|,$$
where $U$ again is a small neighborhood around the $\tau_2$-coordinate of the singularity. As before, we take a detour via one-variable functions, with the following lemma playing a crucial role.
\begin{lemma} Let $B(z)$ be a finite Blaschke product with $N$ distinct, simple zeros $a_1,...,a_N\in\mathbb{D}.$ Then the following estimate holds
$$\int_{\mathbb{D}}|B'(z)|^2(1-|z|^2)^{1-\alpha}dA(z)\ge  (1-|a_j|^2)^{1-\alpha}\prod_{j\neq k}\left|\frac{a_k-a_j}{1-\overline{a_k}a_j}\right|^2, \quad  j=1,\ldots, N.$$
In particular, 
\[\int_{\mathbb{D}}|B'(z)|^2(1-|z|^2)^{1-\alpha}dA(z)\ge
\max_{j=1,\ldots, N}\left\{(1-|a_j|^2)^{1-\alpha}\prod_{j\neq k}\left|\frac{a_k-a_j}{1-\overline{a_k}a_j}\right|^2\right\}.
\label{estimate}
\]
\end{lemma}
\begin{proof} The proof is essentially an application of bounded point evaluation property for weighted Bergman spaces, see \cite[Proposition 1.1]{HedenmalmBook}. Namely, $$(1-|a_j|^2)^{3-\alpha}|B'(a_j)|^2\leq \|B'(z)\|^2_{A^2_{1-\alpha}(\mathbb{D})}.$$
Substituting $$|B'(a_j)|=\frac{1}{1-|a_j|^2}\prod_{j\neq k}\left|\frac{a_j-a_k}{1-\overline{a_k}a_j}\right|,$$ we obtain the desired statements. 
\end{proof}
\begin{lemma} \label{generousReferee} Let $B$ be a finite Blaschke product and let $B_*$ be a finite Blaschke product with the same zeros as $B$, but with each zero having multiplicity $1$. Then, for each $\alpha\in(0,2),$ 
$$\int_{\mathbb{D}}|B'(z)|^2(1-|z|^2)^{1-\alpha}dA(z)\ge \int_{\mathbb{D}}|B_{*}'(z)|^2(1-|z|^2)^{1-\alpha}dA(z).$$
\end{lemma}
\begin{proof}
The initial finite Blaschke product can be factored as $B(z)=B_*(z)\cdot R(z),$ where $R$ is another (potentially constant) finite Blaschke product, built using the repeated Blaschke factors of $B.$ The backwards shift operator, defined as
$$S^*(f):=\frac{f(z)-f(0)}{z}, \quad z\in \mathbb{D},$$
is a contraction in $D_{\alpha}(\mathbb{D}),$ the Dirichlet-type spaces on the unit disc. To see this, observe that
$$\|S^*f(z)\|^2_{D_{\alpha}(\mathbb{D})}=\sum_{n=0}^{\infty}(n+1)^{\alpha}|a_{n+1}|^2=\sum_{m=1}^{\infty}m^{\alpha}|a_m|^2\leq \sum_{m=1}^{\infty}(m+1)^{\alpha}|a_m|^2=\|f\|^2_{D_{\alpha}(\mathbb{D})}.$$
Let us define the \textit{co-analytic Toeplitz operator} $$T_{\overline{R}}f=P_{+}(\overline{R}f)\colon D_{\alpha}(\mathbb{D})\to D_{\alpha}(\mathbb{D})$$
with symbol $\overline{R};$ here $P_{+}$ denotes orthogonal projection onto $D_{\alpha}(\mathbb{D})$.
Since $R$ is itself a finite Blaschke product, we can write $$R(z)=\sum_{n=0}^{\infty}b_{n}z^n, \quad z\in \mathbb{D},$$ and as a consequence
$$T_{\overline{R}}(f)=\sum_{n=0}^{\infty}\overline{b_n}(S^*)^n(f).$$ By von Neumann's inequality for functions in the disk algebra, we obtain
$$\|T_{\overline{R}}(f)\|^2_{D_{\alpha}(\mathbb{D})}\leq \|R\|_{H^{\infty}(\mathbb{D})}^2\|f\|_{D_{\alpha}(\mathbb{D})}^2.$$
Moreover, on the circle $\mathbb{T},$ we observe that $\overline{R}B=B_{*}$ and therefore
$T_{\overline{R}}(B)=B_{*}.$ Setting $f=B$ finishes the proof.
\end{proof} 
\begin{remark}
An earlier version of our paper imposed the additional assumption that $p$ be locally square free, meaning that $p$ have no repeated factors in its local Puiseux factorizations at zeros on $\mathbb{T}$. Lemma \ref{generousReferee}, which is likely well-known to specialists, was very generously suggested to us by an anonymous referee, and has allowed us to remove this additional assumption to get a full characterization.  
\end{remark}
Bypassing the case of repeated zeros, by combining Lemmas \ref{estimate}, \ref{generousReferee} we arrive at a non-trivial lower bound for the $\mathcal{D}_{1,1,\alpha}-$norm. We have the following estimate
\begin{align}
||\varphi||^2_{\mathcal{D}_{1,1,\alpha}(\mathbb{D}^2)}=&\int_{\mathbb{T}}\int_{\mathbb{D}}\left|\frac{\partial \varphi}{\partial z_1}(z_1,\zeta_2)\right|^2(1-|z_1|^2)^{1-\alpha}dA(z_1)|d\zeta_2|\notag&&\\
=&\int_{\mathbb{T}}\int_{\mathbb{D}}|\varphi'_{\zeta_2}(z_1)|^2(1-|z_1|^2)^{1-\alpha}dA(z_1)|d\zeta_2|&&\\
\gtrsim& \int_{\mathbb{T}}\max_{1\leq j\leq N}\Biggl\{(1-|a_j(\zeta_2)|^2)^{1-\alpha}\prod_{j\neq k}\left|\frac{a_{j}(\zeta_2)-a_{k}(\zeta_2)}{1-\overline{a_{k}(\zeta_2)}a_{j}(\zeta_2)}\right|^2\Biggr\}|d\zeta_2| \notag
\end{align}
As we can observe here, the product inside the integration is a product of hyperbolic pseudodistances of the branches of the zero set $\mathcal{Z}_{\widetilde{p}}.$ One needs to see how these  pseudodistances behave when the $z_2$-variable approaches $\tau_2\in\mathbb{T}.$ The idea is to prove that whenever we approach the $z_2$-coordinate of the singularity, the pairwise pseudodistances tend to a constant bounded away from zero. This, along with a simple continuity argument would lead to an estimate of the form
\begin{align}
\int_{\mathbb{T}}\max_{1\leq j\leq N}\Biggl\{(1-|a_j(\zeta_2)|^2)^{1-\alpha}\prod_{j\neq k}\left|\frac{a_{j}(\zeta_2)-a_{k}(\zeta_2)}{1-\overline{a_{k}(\zeta_2)}a_{j}(\zeta_2)}\right|^2\Biggr\}|d\zeta_2|\gtrsim \int_{U}|\tau_2-\zeta_2|^{K(1-\alpha)}|d\zeta_2|
\end{align}
where $U$ is a neighborhood around $\tau_2\in\mathbb{T}$ and $K$ is the global contact order. This, in turn, would provide us the desired result. In order to be able to prove this rigorously, we will transfer the problem to the upper halfplane $\mathbb{H}$ and we will exploit the mentioned local theorem (see \cite[Theorem 1.2]{Polonici}). Using the Cayley transform $\beta:\mathbb{H}\to \mathbb{D}$ as in Section \ref{Section 3} , we can map every branch $a_j(\zeta_2)$ to a corresponding branch $y_j(z_2).$ Thus we are reduced to understanding how the pseudodistances
$$\rho_{\mathbb{H}}(y_j(z_2),y_k(z_2))=\left|\frac{y_j(z_2)-y_k(z_2)}{y_j(z_2)-\overline{y_k(z_2)}}\right|$$
behave as $z_2$ approaches the origin, when $z_2$ is restricted to the real line (respectively $\zeta_2\in\mathbb{T}.$)

As we have mentioned in Section 3, in a punctured neighborhood of the point $(0,0),$ Theorem \ref{KneseLocal} assures that the functions $p/[p]$ and $[p]/p$ are bounded. Recall that 
\begin{align}\label{loc}
[p](z_1,z_2)=\prod_{j}^{N}(z_1+q_j(z_2)+iz_2^{2L_j}),
\end{align}

with the hypothesis that no two branches of the zero set have equal initial segments $q_j(z_2)$ and equal contact orders $2L_j.$  It suffices to work with the local model and ignore the analytic part $\Psi_j$ of every branch (see \cite{TAMSKNESE}), as we will work in a neighborhood of $0$. We need to consider three cases.

\textbf{Case 1: Identical initial segments-different contact orders.}
Take two distinct branches with $q_i=q_j$ and $2L_j<2L_k.$ The branches for us are
$y_j=q_j(z_2)+iz_2^{2L_j}$
and $y_k=q_j(z_2)+iz_2^{2L_k},$ where $z_2$ is restricted to $\mathbb{R}$ (as the $z_2-$variable is restricted on the boundary), after applying \ref{loc},
Then, we see that 
$$
\rho_{\mathbb{H}}(y_j,y_k)=\left|\frac{q_{j}(z_2)+iz_2^{2L_j}-q_{j}(z_2)-iz_2^{2L_k}}{q_{j}(z_2)+iz_2^{2L_j}-q_{k}(z_2)+iz_2^{2L_k}}\right|=\left|\frac{iz_2^{2L_j}(1-z_2^{2L_k-2L_j})}{iz_2^{2L_j}(1+z_2^{2L_k-2L_j})}\right|\to 1, \quad \textrm{as}\, \,z_2 \to 0.
$$

\textbf{Case 2: Different initial segments-same contact order} Here we need to clarify what exactly is meant by ``different" initial segments. Let us be more precise: $q_j,q_k\in \mathbb{R}[z_2]$ are two polynomials with real coefficients which differ in at least one coefficient. This means that 
\begin{align}
\rho_{\mathbb{H}}(y_j,y_k)&=\left|\frac{q_{j}(z_2)+iz_2^{2L_j}-q_{k}(z_2)-iz_2^{2L_j}}{q_{j}(z_2)+iz_2^{2L_j}-q_{k}(z_2)+iz_2^{2L_j}}\right|&&\\
&\approx \left|\frac{q_j(z_2)-q_k(z_2)}{q_j(z_2)-q_k(z_2)+2iz_2^{2L_j}}\right|=1 \notag, \quad \textrm{as}\,\, z_2\to 0,
\end{align}
since $1<\deg(q_j-q_k)<2L_j.$  

\textbf{Case 3: Different initial segments-different contact orders}\\
Assume that $2L_j<2L_k$ and $q_j$ differs from $q_k$ in at least one coefficient; recall that $q_j(0)=q_k(0)=0$ and $q_j'(0),q'_k(0)>0.$ Without loss of generality, assume that $q_j'(0)\neq q_k'(0)$ (otherwise we can consider the lowest degree terms of $q_k$ and $q_j$ that do differ, and proceed similarly). Locally, we have 
\begin{align}
\rho_{\mathbb{H}}(y_j,y_k)&=\left|\frac{q_{j}(z_2)+iz_2^{2L_j}-q_{k}(z_2)-iz_2^{2L_k}}{q_{j}(z_2)+iz_2^{2L_j}-q_{k}(z_2)+iz_2^{2L_k}}\right| &&\\
&=\left|\frac{z_2(Q(z_2)+iz_2^{2L_j-1}(1-z_2^{2L_k-2L_j}))}{z_2(Q(z_2)+iz_2^{2L_j-1}(1+z_2^{2L_k-2L_j}))}\right| \notag&&\\
&=\left|\frac{Q(0)+\text{higher order terms}+iz_2^{2L_j-1}(1-z_2^{2L_k-2L_j})}{Q(0)+\text{higher order terms}+iz_2^{2L_j-1}(1+z_2^{2L_k-2L_j})}\right|\notag&&\\
&\approx\left|\frac{Q(0)}{Q(0)}\right|= 1, \notag
\end{align}
with $Q(z_2)$ being the polynomial that emerges after factoring out $z_2$ from the difference $q_j(z_2)-q_k(z_2). $ This implies that $Q(0)\neq 0,$ as it is the first non-zero coefficient after factoring out $z_2.$ 

At this point, we shall estimate the integral 
$$I=\int_{\mathbb{T}}\max_{1\leq j\leq N}\Biggl\{(1-|a_j(\zeta_2)|^2)^{1-\alpha}\prod_{j\neq k}\left|\frac{a_{j}(\zeta_2)-a_{k}(\zeta_2)}{1-\overline{a_k(\zeta_2)}a_{j}(\zeta_2)}\right|^2\Biggr\}|d\zeta_2|.$$
Set $$P(\zeta_2)=\prod_{j\neq k}\left|\frac{a_{j}(\zeta_2)-a_{k}(\zeta_2)}{1-\overline{a_{k}(\zeta_2)}a_{j}(\zeta_2)}\right|,$$
and create 3 sets of indices, denoted by $S_1,S_2,S_3.$ Each set contains the indices that correspond to every case we considered above. Then, write
\begin{align}
P(\zeta_2)=&\prod_{k\in S_1}\rho_{\mathbb{D}}(a_j,a_k) \prod_{k\in S_2}\rho_{\mathbb{D}}(a_j,a_k) \prod_{k\in S_3}\rho_{\mathbb{D}}(a_j,a_k) &&\\
=&P_1(\zeta_2)P_2(\zeta_2)P_3(\zeta_2). \notag
\end{align}
It is clear by the case study above, that the product in question will be bounded away from zero, in a neighborhood $U\subset \mathbb{T}$ around the $z_2-$coordinate of the singularity. As a consequence, we obtain the desired estimate
\begin{align}
\int_{\mathbb{T}}\max_{1\leq j\leq N}\Biggl\{(1-|a_j(\zeta_2)|^2)^{1-\alpha}\prod_{j\neq k}\left|\frac{a_{j}(\zeta_2)-a_{k}(\zeta_2)}{1-\overline{a_{k}(\zeta_2)}a_{j}(\zeta_2)}\right|^2\Biggr\}|d\zeta_2|\gtrsim &\int_{U}\max_{1\leq j\leq N}(1-|a_j(\zeta_2)|^2)^{1-\alpha}|d\zeta_2|&&\\
\gtrsim&\int_{U}|\zeta_2-\tau_2|^{K(1-\alpha)}|d\zeta_2|.\notag
\end{align}
The last integral is known to converge if and only if $\alpha<1+\frac{1}{K}.$ This finishes the proof.
\end{proof}

In what follows, we will give a proof of the two corollaries stated in the main results section, namely Corollaries \ref{anisotropicchar} and \ref{sufficient0}. For this, we shall need the following  
\begin{proposition} Let $f\in \mathcal{O}(\mathbb{D}^2).$ Then, $f\in \mathcal{D}_{1,1,\alpha}(\mathbb{D}^2),$ if and only if $f\in \mathfrak{D}_{(\alpha,0)}(\mathbb{D}^2)\cap\mathfrak{D}_{(0,\alpha)}(\mathbb{D}^2).$
\label{claDir}
\end{proposition}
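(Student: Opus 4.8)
The proposition asserts an equivalence of two spaces defined by coefficient conditions, so my plan is to reduce everything to an explicit comparison of the coefficient weights. Let me recall the relevant weights. For $f=\sum_{k,\ell}a_{k\ell}z_1^kz_2^\ell$, the anisotropic space $\mathfrak{D}_{(\alpha,0)}(\mathbb{D}^2)$ is governed by the weight $(k+1)^\alpha(\ell+1)^0=(k+1)^\alpha$, and symmetrically $\mathfrak{D}_{(0,\alpha)}(\mathbb{D}^2)$ is governed by $(\ell+1)^\alpha$. Thus membership in the intersection is equivalent to finiteness of
\[
\sum_{k,\ell}\bigl[(k+1)^\alpha+(\ell+1)^\alpha\bigr]|a_{k\ell}|^2.
\]
So the entire proposition will follow once I establish that the integral norm defining $\mathcal{D}_{1,1,\alpha}(\mathbb{D}^2)$ is equivalent to a coefficient norm with precisely this weight (up to the additive $H^2$ term, whose weight is $1$ and is dominated by both comparison weights).

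The first step is therefore to compute the two mixed integrals in the $\mathcal{D}_{1,1,\alpha}$ norm coefficient-wise. I would fix the integral
\[
\sup_{r<1}\int_{\mathbb{T}}\int_{\mathbb{D}}\Bigl|\tfrac{\partial f}{\partial z_1}(z_1,re^{i\theta})\Bigr|^2(1-|z_1|^2)^{1-\alpha}\,dA(z_1)\,d\theta,
\]
expand $\partial f/\partial z_1=\sum_{k\ge 1,\ell}k\,a_{k\ell}z_1^{k-1}z_2^\ell$, and use orthogonality of $\{e^{i\ell\theta}\}$ on $\mathbb{T}$ in the $z_2$-variable together with orthogonality of $\{z_1^{k-1}\}$ in the weighted Bergman integral over $\mathbb{D}$. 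The $\mathbb{T}$-integral in $z_2$ collapses the double sum in $\ell$ to a single diagonal sum (this is where the mixed norm is decisively simpler than the full $\mathfrak{D}_\alpha$ integral), contributing the factor $r^{2\ell}$, and the supremum over $r<1$ sends $r^{2\ell}\to 1$. The weighted Bergman integral $\int_{\mathbb{D}}|z_1|^{2(k-1)}(1-|z_1|^2)^{1-\alpha}\,dA(z_1)$ is a Beta integral whose value is comparable to $(k+1)^{-(2-\alpha)}$ as $k\to\infty$; multiplying by the $k^2$ from the derivative yields a weight $\approx k^2\cdot(k+1)^{-(2-\alpha)}\approx (k+1)^\alpha$. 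Hence this first integral is comparable to $\sum_{k,\ell}(k+1)^\alpha|a_{k\ell}|^2$, and the second integral is comparable to $\sum_{k,\ell}(\ell+1)^\alpha|a_{k\ell}|^2$ by the symmetric computation.

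Combining the two, and noting $\|f\|_{H^2}^2=\sum|a_{k\ell}|^2$ is absorbed since $1\lesssim (k+1)^\alpha+(\ell+1)^\alpha$, I obtain
\[
\|f\|^2_{\mathcal{D}_{1,1,\alpha}(\mathbb{D}^2)}\approx\sum_{k,\ell}\bigl[(k+1)^\alpha+(\ell+1)^\alpha\bigr]|a_{k\ell}|^2,
\]
which is exactly the coefficient norm of $\mathfrak{D}_{(\alpha,0)}\cap\mathfrak{D}_{(0,\alpha)}$. The equivalence of the two memberships is then immediate, since a sum of nonnegative terms with comparable weights is finite for one weight if and only if it is finite for the other.

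The only genuinely delicate point is the justification of the Beta-integral asymptotic $\int_{\mathbb{D}}|z_1|^{2k}(1-|z_1|^2)^{1-\alpha}\,dA(z_1)\approx (k+1)^{-(2-\alpha)}$ uniformly in $k$, which requires $\alpha\in(0,2)$ so that the weight exponent $1-\alpha$ stays in $(-1,1)$ and the integral converges; this is a standard Gamma-function estimate via $\Gamma(k+1)/\Gamma(k+3-\alpha)\approx (k+1)^{-(2-\alpha)}$. The interchange of the supremum over $r$ with the coefficient expansion is harmless because every partial sum is increasing in $r$ by monotone convergence, so the supremum is attained in the limit $r\to 1^-$ and equals the full diagonal sum. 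I expect this monotonicity-and-Beta-asymptotics bookkeeping to be the main, though routine, obstacle; no deep input beyond orthogonality and the standard moment estimate is needed.
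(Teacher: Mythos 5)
Your proposal is correct, but it reaches the conclusion by a genuinely different route than the paper. The paper works at the level of integrals: it first observes that $\sup_{r<1}\int_{\mathbb{T}}|\partial_{z_1}f(z_1,re^{i\theta})|^2\,d\theta$ is the squared $H^2(\mathbb{D})$-norm of the slice $z_2\mapsto \partial_{z_1}f(z_1,z_2)$, then applies the one-variable Littlewood--Paley formula to convert that $H^2$-norm into a Bergman integral in $z_2$ with weight $(1-|z_2|^2)$, and finally uses Fubini to identify the two terms of the $\mathcal{D}_{1,1,\alpha}$-norm with the mixed-partial integrals $\int_{\mathbb{D}^2}|\partial_{z_1}\partial_{z_2}f|^2\,dA_{\alpha}(z_1)\,dA_{0}(z_2)$ and $\int_{\mathbb{D}^2}|\partial_{z_1}\partial_{z_2}f|^2\,dA_{0}(z_1)\,dA_{\alpha}(z_2)$, i.e.\ with the integral forms of the $\mathfrak{D}_{(\alpha,0)}$ and $\mathfrak{D}_{(0,\alpha)}$ norms. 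You instead work entirely on the coefficient side: orthogonality in both variables together with the moment estimate $\int_{\mathbb{D}}|z|^{2k}(1-|z|^2)^{1-\alpha}\,dA(z)\approx (k+1)^{-(2-\alpha)}$ (valid precisely because $\alpha<2$) yields the explicit equivalence
\begin{equation*}
\|f\|^2_{\mathcal{D}_{1,1,\alpha}(\mathbb{D}^2)}\approx \sum_{k,\ell}\bigl[(k+1)^{\alpha}+(\ell+1)^{\alpha}\bigr]|a_{k\ell}|^2,
\end{equation*}
from which the proposition is immediate upon comparison with the coefficient definition of $\mathfrak{D}_{(\alpha,0)}\cap\mathfrak{D}_{(0,\alpha)}$. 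Each approach buys something: the paper's argument stays close to the integral formulas used elsewhere in the paper (e.g.\ the Douglas-type formulas of Section 5), while yours is more self-contained and produces, as a byproduct, an intrinsic coefficient description of $\mathcal{D}_{1,1,\alpha}$ extending the one stated for $\mathcal{D}=\mathcal{D}_{1,1}$ in the introduction. Your route also quietly avoids a point the paper leaves implicit: the paper's integral characterization of $\mathfrak{D}_{\vec{\alpha}}$ is stated only for exponents in $(0,2)$, so identifying the weight-$(1-|z_2|^2)$ integral with the $\mathfrak{D}_{(\alpha,0)}$-norm requires extending that equivalence to the exponent $0$ (which is again Littlewood--Paley); your coefficient computation never needs this identification. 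Do make sure, when writing this up, that you keep the $H^2$ term in play for the lower bound as well as the upper: the derivative integrals alone miss the $k=0$ (respectively $\ell=0$) rows of the coefficient sum, and it is the $\|f\|^2_{H^2}$ summand that supplies them.
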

\begin{proof}
One can observe that
$$\int_{\mathbb{D}}\sup_{r>0}\int_{\mathbb{T}}|\partial_{z_1}f(z_1,re^{i\theta})|^2d\theta(1-|z_1|^2)^{1-\alpha} dA(z_1)=\int_{\mathbb{D}}\|\partial_{z_1}f(z_1,re^{i\theta})\|^2_{H^2(\mathbb{D})}(1-|z_1|^2)^{1-\alpha}dA(z_1)$$
Due to the Littlewood-Paley formula (see e.g. \cite{Yama}) and Fubini's Theorem, we obtain
\begin{align}
\int_{\mathbb{D}}\|\partial_{z_1}f(z_1,re^{i\theta})\|^2_{H^2(\mathbb{D})}dA_{\alpha}(z_1)=&\int_{\mathbb{D}}\left(\int_{\mathbb{D}}|\partial_{z_2}\partial_{z_1}f(z_1,z_2)|^2(1-|z_1|^2)dA(z_2)\right)dA_{\alpha}(z_1) \notag &&\\
=&\int_{\mathbb{D}^2}\left|\frac{\partial^2 f}{\partial z_1 \partial z_2}(z_1,z_2)\right|^2dA_{0}(z_1)dA_{\alpha}(z_2)
\end{align}
Repeating the same argument for the second integral, we arrive at
\begin{align}
\int_{\mathbb{D}}\|\partial_{z_2}f(re^{i\theta},z_2)\|^2_{H^2(\mathbb{D})}dA_{\alpha}(z_2)=&\int_{\mathbb{D}}\left(\int_{\mathbb{D}}|\partial_{z_1}\partial_{z_2}f(z_1,z_2)|^2(1-|z_1|^2)dA(z_1)\right)dA_{\alpha}(z_2) \notag &&\\
=&\int_{\mathbb{D}^2}\left|\frac{\partial^2 f}{\partial z_1 \partial z_2}(z_1,z_2)\right|^2dA_{0}(z_2)dA_{\alpha}(z_1)
\end{align}
Summing up we obtain the desired equivalence. 
\end{proof}
\begin{remark} One can immediately observe the following implication. If
$f\in \mathcal{D}_{1,1,\alpha}(\mathbb{D}^2)$ then $f\in \mathfrak{D}_{\frac{\alpha}{2}}(\mathbb{D}^2).$
\end{remark}
To see this, we apply the geometric-arithmetic mean inequality to get
\begin{align}
||f||^2_{\mathcal{D}_{1,1,\alpha}(\mathbb{D}^2)}=&\int_{\mathbb{D}^2}\left|\frac{\partial^2 f}{\partial z_1 \partial z_2}(z_1,z_2)\right|^2dA_{0}(z_1)dA_{\alpha}(z_2)+\int_{\mathbb{D}^2}\left|\frac{\partial^2 f}{\partial z_1 \partial z_2}(z_1,z_2)\right|^2dA_{\alpha}(z_1)dA_{1}(z_2) &&\\
\gtrsim&\int_{\mathbb{D}^2}\left|\frac{\partial^2 f}{\partial z_1 \partial z_2}(z_1,z_2)\right|^2(1-|z_1|^2)^{\frac{2-\alpha}{2}}(1-|z_2|^2)^{\frac{2-\alpha}{2}}dA(z_1)dA(z_2) \notag .
\end{align}
These two observations now allow us to prove Corollaries \ref{anisotropicchar} and \ref{sufficient0}.

Let us here provide the proof of Theorem \ref{derivativedirichlet}.

\begin{proof}[Proof of Theorem \ref{derivativedirichlet}] From our assumption on the bidegree of $p$, the function $\varphi_{\zeta_2}(z_1)$ is a single Blaschke factor in the $z_1-$variable , that is $$\varphi(z_1,\zeta_2)=\lambda(\zeta_2)\frac{z_1-a(\zeta_2)}{1-\overline{a(\zeta_2)}z_1}, \quad z_1\in\mathbb{D}, \zeta_2\in\mathbb{T}$$ for some $\lambda\in\mathbb{T}.$ 
This, in turn, implies that $$\frac{\partial^n{\varphi}}{\partial z_1^n}(z_1,\zeta_2)=\lambda(\zeta_2)\frac{\overline{a(\zeta_2)}^{n-1}(1-|a(\zeta_2)|^2)}{(1-\overline{a(\zeta_2)}z_1)^{n+1}}.$$
Now we will calculate the $\mathcal{D}_{n,m}-$norm of $\varphi.$ By the Rudin-Forelli estimates, 
\begin{align}
\int_{\mathbb{T}}\int_{\mathbb{D}}\left|\frac{\partial^n{\varphi}}{\partial z_1^n}(z_1,e^{i\theta})\right|^2dA(z_1)d\theta&=\int_{\mathbb{T}}\int_{\mathbb{D}}\left|\frac{\overline{a(e^{i\theta})}^{n-1}(1-|a(e^{i\theta})|^2)}{(1-\overline{a(e^{i\theta})}z_1)^{n+1}}\right|^2dA(z_1)d\theta &&\\
&= \int_{\mathbb{T}}|\overline{a(e^{i\theta})}|^{2n-2}(1-|a(e^{i\theta})|^2)^{2}\left(\int_{\mathbb{D}}\frac{dA(z_1)}{|1-\overline{a(e^{i\theta})}z_1|^{2n+2}}\right)d\theta\notag&&\\
&\approx \int_{\mathbb{T}}|\overline{a(e^{i\theta})}|^{2n-2}(1-|a(e^{i\theta})|^2)^{2-2n}d\theta \notag&&\\
&\approx \int_{\mathbb{T}}(1-|a(e^{i\theta})|^2)^{2-2n}d\theta \notag&&\\
&\approx \int_{U}|\tau_2-e^{i\theta}|^{K(2-2n)}d\theta, \notag
\end{align}
with the last integral converging if and only if $n<1+\frac{1}{2K}$. This implies that we have divergence of the integral whenever we take derivatives of order $n\geq 2$. On the other hand, assuming that $p$ is either of bidegree $(1,k)$ or $(k,1)$ for a positive integer $k>1,$ and $\varphi\in\mathcal{D}_{n,m},$ then the above estimates reveal that $n<2$ and $m<2$ respectively to each case, making the condition necessary. This observation finishes the proof.
\end{proof} 
\section{An alternative approach using Agler Kernels} \label{section 5}
In this section, we explore an alternative approach to the RIF membership problem by using Agler decompositions. In brief, membership of a RIF in a space of Dirichlet type is translated into integrability requirements for certain Agler kernels.

In 1988, Agler proved that every holomorphic function $f\colon\mathbb{D}^2\to\mathbb{D}$ posseses a decomposition of the form
$$1-f(z)\overline{f(w)}=(1-z_1\overline{w_1})K_1(z,w)+(1-z_2\overline{w_2})K_2(z,w),$$
where $K_1,K_2\colon\mathbb{D}^2\times\mathbb{D}^2\to \mathbb{C}$ are positive semi-definite kernel functions. For more on these kernels, see  \cite{AMY} and the references therein. Later, Knese \cite{KneseIndiana} gave a \textit{refined Agler decomposition} for holomorphic functions $f\colon\mathbb{D}^2\to\mathbb{D}.$ 
Let us provide more details here. Every holomorphic function $f\colon\mathbb{D}^2\to\mathbb{D}$ has a decomposition of the form
$$f(z)-f(w)=(z_1-w_1)L_{1}(z,w)+(z_2-w_2)L_2(z,w)$$ for all $z,w\in\mathbb{D}^2,$ 
where $L_1,L_2$ are again positive semi-definite kernel functions which satisfy \[|L_j(z,w)|^2\leq K_j(z,z)K_j(w,w)\quad  \textrm{and}\quad  L_j(z,z)=\frac{\partial f}{\partial z_j}(z).\] 

The proof of \cite[Theorem 10.2]{PLMS} combined the local Dirichlet integral, a two-variable Douglas formula, and Agler kernels. In the present article, we imitate their approach, utilizing the refined Agler Kernels from the paper \cite{KneseIndiana}.

Let us recall here some facts about the classical Dirichlet space of the unit disk. For a function $f\in H^2(\mathbb{D}),$ the Dirichlet integral $\int_{\mathbb{D}}|f'(z)|^2dA$ is equal to
$$\mathrm{Doug}(f):=\frac{1}{4\pi^2}\int_{\mathbb{T}^2}\left|\frac{f(\zeta)-f(\eta)}{\zeta-\eta}\right|^2|d\zeta||d\eta|.$$
See \cite[Chapter 1]{Ransford} for a proof of this fact.

In the case of the weighted Dirichlet-type spaces, a similar formula holds for the weighted Dirichlet integral, with the difference here being that we have an equivalence of semi-norms and not a strict equality. To be precise
$$\mathcal{D}_{a}(f)=\int_{\mathbb{D}}|f'(z)|^2(1-|z|^2)^{1-\alpha}dA(z)\approx\int_{\mathbb{T}^2}\frac{|f(\zeta)-f(\eta)|^2}{|\zeta-\eta|^{1+\alpha}}|d\zeta||d\eta|.$$
A similar formula is established in \cite{PLMS} for the space $\mathfrak{D}(\mathbb{D}^2)$. Here we also state and prove a weighted Douglas formula for the bidisc.
\begin{proposition} Let $0<\alpha\leq 1$ and suppose $f\in\mathfrak{D}_{\alpha}(\mathbb{D}^2)$. Then 
\begin{align}
\mathrm{Doug}_{a}(f)=|f(0,0)|^2+&\int_{\mathbb{T}^2}\frac{|f(\zeta_1,0)-f(\eta_1,0)|^2}{|\zeta_1-\eta_1|^{1+\alpha}}|d\zeta_1||d\eta_1| &&\\
                    +&\int_{\mathbb{T}^2}\frac{|f(0,\zeta_1)-f(0,\eta_1)|^2}{|\zeta_2-\eta_2|^{1+\alpha}}|d\zeta_2||d\eta_2| \notag &&\\
                    +&\int_{\mathbb{T}^4}\frac{|f(\zeta_1,\zeta_2)-f(\zeta_1,\eta_2)+f(\eta_1,\eta_2)-f(\eta_1,\zeta_2)|^2}{|\zeta_1-\eta_1|^{1+\alpha}|\zeta_2-\eta_2|^{1+\alpha}}|d\zeta||d\eta| \notag
\end{align}
is an equivalent Dirichlet norm for the space $\mathfrak{D}_{\alpha}(\mathbb{D}^2)$
\label{weighteddouglas}
\end{proposition}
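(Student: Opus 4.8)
The plan is to reduce the whole computation to the one-variable weighted Douglas formula recalled just above the statement, exploiting the tensor-product structure $\mathfrak{D}_\alpha=D_\alpha\otimes D_\alpha$ and evaluating each of the four terms of $\mathrm{Doug}_a(f)$ on the coefficient side. Writing $f=\sum_{k,\ell}a_{k\ell}z_1^kz_2^\ell$, the goal is to show $\mathrm{Doug}_a(f)\approx\sum_{k,\ell}(k+1)^\alpha(\ell+1)^\alpha|a_{k\ell}|^2$, which is exactly the coefficient norm of $\mathfrak{D}_\alpha(\mathbb{D}^2)$.

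The key preliminary observation is that the one-variable weighted Douglas integral \emph{diagonalizes}. Since the kernel $|\zeta-\eta|^{-(1+\alpha)}$ depends only on $\zeta\bar\eta$ (equivalently, on $\theta-\phi$ once we write $\zeta=e^{i\theta}$, $\eta=e^{i\phi}$), it acts as a convolution on $\mathbb{T}$, and orthogonality of the characters kills all off-diagonal contributions. Thus for $g=\sum_k b_kz^k$ one has $\int_{\mathbb{T}^2}|g(\zeta)-g(\eta)|^2|\zeta-\eta|^{-(1+\alpha)}|d\zeta||d\eta|\approx\sum_{k\geq1}c_k|b_k|^2$, where the diagonal weights satisfy $c_k\approx(k+1)^\alpha$ for $k\geq1$ and $c_0=0$ (constants have vanishing Douglas integral). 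Establishing this precise diagonal form, with constants uniform in $k$, is the technical heart of the argument.

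Granting this, I would compute the four terms. Term one is simply $|a_{00}|^2$. Terms two and three are one-variable Douglas integrals of the boundary slices $f(\cdot,0)=\sum_k a_{k0}z_1^k$ and $f(0,\cdot)=\sum_\ell a_{0\ell}z_2^\ell$, so by the diagonalization they equal $\approx\sum_{k\geq1}(k+1)^\alpha|a_{k0}|^2$ and $\approx\sum_{\ell\geq1}(\ell+1)^\alpha|a_{0\ell}|^2$. For the mixed term, the crucial algebraic fact is that the second difference factorizes, $f(\zeta_1,\zeta_2)-f(\zeta_1,\eta_2)+f(\eta_1,\eta_2)-f(\eta_1,\zeta_2)=\sum_{k,\ell}a_{k\ell}(\zeta_1^k-\eta_1^k)(\zeta_2^\ell-\eta_2^\ell)$. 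Because the kernel $|\zeta_1-\eta_1|^{-(1+\alpha)}|\zeta_2-\eta_2|^{-(1+\alpha)}$ is a product of two convolution kernels, Fubini together with the diagonalization applied separately in $(\zeta_1,\eta_1)$ and in $(\zeta_2,\eta_2)$ gives $\approx\sum_{k,\ell\geq1}(k+1)^\alpha(\ell+1)^\alpha|a_{k\ell}|^2$. Summing, the four index sets $\{(0,0)\}$, $\{(k,0):k\geq1\}$, $\{(0,\ell):\ell\geq1\}$, and $\{(k,\ell):k,\ell\geq1\}$ partition $\mathbb{N}_0^2$, and in each the weight matches $(k+1)^\alpha(\ell+1)^\alpha$ (using $1^\alpha=1$), which yields the asserted equivalence.

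To make the term-by-term integration and the double orthogonality rigorous, I would first prove the identity for polynomials $f$, where all sums are finite and Fubini is unproblematic, and then pass to general $f\in\mathfrak{D}_\alpha(\mathbb{D}^2)$ by density of polynomials combined with a dilation argument on $f_r(z)=f(rz)$ as $r\uparrow1$. I expect the main obstacle to be exactly this limiting step in the mixed term: one must control the convergence of the $\mathbb{T}^4$-integral and verify that the uniform constants in $c_k\approx(k+1)^\alpha$ combine without loss when the diagonalization is invoked twice in the two independent pairs of variables. The hypothesis $0<\alpha\le1$ enters precisely here, since it is the range in which the one-variable boundary Douglas formula, with its diagonal weights $c_k\approx(k+1)^\alpha$, is valid.
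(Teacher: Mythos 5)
Your proposal is correct and takes essentially the same approach as the paper: the paper likewise disposes of the first three terms via $|f(0,0)|=|a_{00}|$ and the one-variable weighted Douglas formula applied to the slices $f(\cdot,0)$ and $f(0,\cdot)$, and handles the mixed term by the change of variables $\zeta_j=e^{i(s_j+t_j)}$, $\eta_j=e^{it_j}$ followed by Parseval's formula applied twice, which is exactly your convolution/orthogonality diagonalization producing the diagonal weights $\approx k^{\alpha}\ell^{\alpha}$. The only difference is cosmetic: you make explicit a polynomial-density/dilation step to justify the interchanges, which the paper passes over silently.
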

\begin{proof} Obviously $|f(0,0)|=|a_{00}|.$ Fixing $z_2=0$ for the second integral, and along with the fact that $f(\cdot,0)\in \mathfrak{D}_{\alpha}(\mathbb{D}),$ we obtain 
$$\int_{\mathbb{T}^2}\frac{|f(\zeta_1,0)-f(\eta_1,0)|^2}{|\zeta_1-\eta_1|^{1+\alpha}}|d\zeta_1||d\eta_1|\approx\sum_{k=1}^{\infty}k^{\alpha}|a_{k0}^2|.$$
We argue similarly for $f(0,\cdot).$ For the last term, we observe that after changing the variables to $\zeta_j=e^{i(s_j+t_j)}, \eta_j=e^{it_j}$ for $j=1,2,$ we obtain
\begin{align}
&\int_{\mathbb{T}^4}\frac{|f(\zeta_1,\zeta_2)-f(\zeta_1,\eta_2)+f(\eta_1,\eta_2)-f(\eta_1,\zeta_2)|^2}{|\zeta_1-\eta_1|^{1+\alpha}|\zeta_2-\eta_2|^{1+\alpha}}|d\zeta||d\eta|= &&
\\
&\int_{[0,2\pi]^2}\int_{[0,2\pi]^2}\frac{|f(e^{i(s_1+t_1)},e^{i(s_2+t_2)})-f(e^{i(s_1+t_1)},e^{it_2})-f(e^{it_1},e^{i(s_2+t_2)})+f(e^{it_1},e^{it_2})|^2}{|e^{is_1}-1|^{1+\alpha}|e^{is_2}-1|^{1+\alpha}}dtds. \notag 
\end{align}
Applying Parseval's formula twice, we obtain
$$\frac{1}{4\pi^2}\sum_{k,\ell=0}^{\infty}|a_{k\ell}|^2\int_{[0,2\pi]^2}\frac{|e^{is_1k}-1|^2|e^{is_2\ell}-1|^2}{|e^{is_1}-1|^{1+\alpha}|e^{is_2}-1|^{1+\alpha}}ds\approx\sum_{k,\ell=0}^{\infty}k^{\alpha}\ell^{\alpha}|a_{k\ell}|^2.$$
and the proof is complete.
\end{proof} 

For what follows, we will refer to the kernel functions $L_j$ as refined Agler kernels. To lighten notation, we introduce the notation $L_{j}((z_1,z_2),(z_1,w_2))=L_j(z_1,z_2,w_2)$.

We lay out our approach in the following series of propositions.
\begin{proposition} Let $\varphi=\frac{\tilde{p}}{p}$ a RIF and $L_j$ its refined Agler Kernels. Then $\varphi\in \mathcal{D}_{1,1,\alpha}$ if and only if
$|\zeta_j-\eta_j|^{\frac{1-\alpha}{2}}L_{j}(\zeta_1,\zeta_2,\eta_j)\in L^2(\mathbb{T}^3), j=1,2.$
\label{5.1.}
\end{proposition}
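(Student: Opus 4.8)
The plan is to reduce each of the two Dirichlet-type integrals in the $\mathcal{D}_{1,1,\alpha}$-norm to a one-variable weighted Dirichlet integral taken along slices, to rewrite these via the one-variable weighted Douglas formula recalled above, and finally to recognize the resulting divided differences as the restricted refined Agler kernels. By the symmetry between the two summands of the norm it suffices to treat the term involving $\partial_{z_2}\varphi$; the computation for $\partial_{z_1}\varphi$ is identical with the roles of the variables interchanged, and produces $L_1$ in place of $L_2$.

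First I would remove the supremum over $r$. For each fixed $z_2\in\mathbb{D}$ the function $z_1\mapsto \partial_{z_2}\varphi(z_1,z_2)$ is bounded and holomorphic on $\mathbb{D}$ (the denominator $p$ is stable, hence bounded away from zero), so the circular mean $\int_{\mathbb{T}}|\partial_{z_2}\varphi(re^{i\theta},z_2)|^2\,d\theta$ is nondecreasing in $r$ with limit the boundary mean. Monotone convergence then lets me pass the supremum through the $dA_\alpha(z_2)$ integration and obtain
\[
\sup_{r<1}\int_{\mathbb{T}}\int_{\mathbb{D}}\left|\frac{\partial\varphi}{\partial z_2}(re^{i\theta},z_2)\right|^2 dA_\alpha(z_2)\,d\theta
=\int_{\mathbb{T}}\left(\int_{\mathbb{D}}\left|\frac{\partial\varphi}{\partial z_2}(\zeta_1,z_2)\right|^2 dA_\alpha(z_2)\right)|d\zeta_1|.
\]
For every $\zeta_1\in\mathbb{T}$ other than the singular coordinate $\tau_1$, the slice $z_2\mapsto\varphi(\zeta_1,z_2)$ is a finite Blaschke product, so the inner integral is its weighted Dirichlet integral; the one-variable weighted Douglas formula, whose implied constants depend only on $\alpha$, gives
\[
\int_{\mathbb{D}}\left|\frac{\partial\varphi}{\partial z_2}(\zeta_1,z_2)\right|^2 dA_\alpha(z_2)\approx\int_{\mathbb{T}^2}\frac{|\varphi(\zeta_1,\zeta_2)-\varphi(\zeta_1,\eta_2)|^2}{|\zeta_2-\eta_2|^{1+\alpha}}|d\zeta_2||d\eta_2|.
\]

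The key step is to insert Knese's refined Agler decomposition at the pair $z=(\zeta_1,\zeta_2)$, $w=(\zeta_1,\eta_2)$. Since the first coordinates agree, the $(z_1-w_1)L_1$ term vanishes and
\[
\varphi(\zeta_1,\zeta_2)-\varphi(\zeta_1,\eta_2)=(\zeta_2-\eta_2)\,L_2(\zeta_1,\zeta_2,\eta_2);
\]
equivalently, on such a slice the restricted kernel is exactly the divided difference $L_2(\zeta_1,\zeta_2,\eta_2)=\frac{\varphi(\zeta_1,\zeta_2)-\varphi(\zeta_1,\eta_2)}{\zeta_2-\eta_2}$. This identity pins $L_2$ down on the slice irrespective of the non-uniqueness of the full Agler decomposition, and it shows the boundary object is well defined almost everywhere. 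Substituting cancels one power of $|\zeta_2-\eta_2|$, and integrating also in $\zeta_1$ yields
\[
\sup_{r<1}\int_{\mathbb{T}}\int_{\mathbb{D}}\left|\frac{\partial\varphi}{\partial z_2}(re^{i\theta},z_2)\right|^2 dA_\alpha(z_2)\,d\theta\approx\int_{\mathbb{T}^3}|\zeta_2-\eta_2|^{1-\alpha}|L_2(\zeta_1,\zeta_2,\eta_2)|^2|d\zeta_1||d\zeta_2||d\eta_2|,
\]
which is precisely $\big\||\zeta_2-\eta_2|^{(1-\alpha)/2}L_2\big\|_{L^2(\mathbb{T}^3)}^2$. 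The same computation with $\partial_{z_1}$ produces the corresponding quantity for $L_1$, and since $\varphi$ is bounded we have $\|\varphi\|_{H^2(\mathbb{D}^2)}<\infty$ automatically; hence $\|\varphi\|_{\mathcal{D}_{1,1,\alpha}}^2$ is comparable to the sum of the two weighted $L^2(\mathbb{T}^3)$ norms, and the stated equivalence follows.

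The main obstacle I anticipate is technical rather than conceptual: justifying the interchange of the supremum with the $z_2$-integration, and verifying that the weighted Douglas formula holds slicewise with constants uniform in $\zeta_1$, so that integrating the two-sided comparison over $\zeta_1\in\mathbb{T}$ is legitimate, with the degenerate slice at $\zeta_1=\tau_1$ discarded as a null set. The only other point requiring care is the meaning of $L_j$ on the distinguished boundary; this is handled by the divided-difference identity above, which makes the restricted kernel canonical and supplies its almost-everywhere boundary values.
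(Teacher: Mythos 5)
Your proposal is correct and follows essentially the same route as the paper: reduce the mixed-norm integrals to slicewise one-variable weighted Dirichlet integrals, rewrite these via the one-variable weighted Douglas formula, and use the refined Agler decomposition at points with one coordinate in common so that the divided difference $\frac{\varphi(\zeta_1,\zeta_2)-\varphi(\zeta_1,\eta_2)}{\zeta_2-\eta_2}$ is identified with the restricted kernel $L_j$, yielding the weighted $L^2(\mathbb{T}^3)$ condition. Your additional remarks (removing the supremum by monotonicity, which the paper handles via its earlier slice lemma, and the canonical a.e.\ boundary meaning of the restricted kernels) only make explicit points the paper leaves implicit.
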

\begin{proof} The main tool for the proof of this result is the one dimensional weighted Douglas Formula. To see this, observe that 
$$\int_{\mathbb{T}}\int_{\mathbb{D}}|\partial_{z_1}\varphi(z_1,e^{i\theta})|^2(1-|z_1|^2)^{1-\alpha}dA(z_1)d\theta=\int_{\mathbb{T}}\|\varphi_{\zeta_2}(z_1)\|^2_{\mathcal{D}_{\alpha}(\mathbb{D})}d\theta.$$
It is well known that
$$\|\varphi_{\zeta_2}(z_1)\|^2_{\mathcal{D}_{\alpha}(\mathbb{D})}\approx\int_{\mathbb{T}^2}\frac{|\varphi_{\zeta_2}(\zeta_1)-\varphi_{\zeta_2}(\eta_1)|^2}{|\zeta_1-\eta_1|^{1+\alpha}}|d\zeta_1||d\eta_1|.$$
Now, we observe that by the refined Agler decomposition
$$\varphi_{\zeta_2}(\zeta_1)-\varphi_{\zeta_2}(\eta_1)=(\zeta_1-\eta_1)L_1(\zeta_1,\zeta_2,\eta_1),$$
$\zeta_1,\zeta_2,\eta_1 \in\mathbb{T}.$ Substituting into the one-dimensional Douglas Formula, we obtain
\begin{align}
\int_{\mathbb{T}}\int_{\mathbb{D}}|\partial_{z_1}\varphi(z_1,e^{i\theta})|^2(1-|z_1|^2)^{1-\alpha}dA(z_1)d\theta\approx &\int_{\mathbb{T}}\left(\int_{\mathbb{T}^2}\frac{|\varphi_{\zeta_2}(\zeta_1)-\varphi_{\zeta_2}(\eta_1)|^2}{|\zeta_1-\eta_1|^{1+\alpha}}|d\zeta_1||d\eta_1|\right)|d\zeta_2|&&\\
=&\int_{\mathbb{T}}\left(\int_{\mathbb{T}^2}\frac{|\zeta_1-\eta_1|^2|L_1(\zeta_1,\zeta_2,\eta_1)|^2}{|\zeta_1-\eta_1|^{1+\alpha}}\right)|d\zeta_2| \notag&&\\
=&\int_{\mathbb{T}^3}|\zeta_1-\eta_1|^{1-\alpha}|L_1(\zeta_1,\zeta_2,\eta_1)|^2|d\zeta_1||d\zeta_2||d\eta_1|\notag 
\end{align}
and the result follows.
\end{proof}
\begin{corollary} Let $\varphi=\frac{\widetilde{p}}{p}$ a RIF and let $L_j$ be its Refined Agler Kernels. Then $L_{j}(\zeta_1,\zeta_2,\eta_1)\in L^2(\mathbb{T}^3), j=1,2.$
\end{corollary}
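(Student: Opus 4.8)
The plan is to obtain the corollary as the endpoint case $\alpha = 1$ of Proposition \ref{5.1.}, combined with the universal membership furnished by Corollary \ref{easy}. The key observation is that the weight appearing in the characterization of Proposition \ref{5.1.}, namely $|\zeta_j - \eta_j|^{\frac{1-\alpha}{2}}$, degenerates to the constant $1$ precisely when $\alpha = 1$. Since the spaces $\mathcal{D}_{1,1,\alpha}(\mathbb{D}^2)$ are defined throughout the range $\alpha \in (0,2)$, the value $\alpha = 1$ is admissible, and at this value the one-dimensional weighted Douglas formula used in the proof of Proposition \ref{5.1.} reduces to the classical Douglas formula (with the denominator $|\zeta_1 - \eta_1|^2$), so the asymptotic identity derived there remains valid.

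Concretely, first I would specialize the equivalence of Proposition \ref{5.1.} to $\alpha = 1$. This yields that $\varphi = \frac{\tilde p}{p} \in \mathcal{D}_{1,1,1}(\mathbb{D}^2)$ if and only if
\[
|\zeta_j - \eta_j|^{0}\, L_j(\zeta_1,\zeta_2,\eta_j) = L_j(\zeta_1,\zeta_2,\eta_j) \in L^2(\mathbb{T}^3), \qquad j = 1,2,
\]
where the boundary values of the refined Agler kernels are understood via the decomposition $\varphi_{\zeta_2}(\zeta_1) - \varphi_{\zeta_2}(\eta_1) = (\zeta_1 - \eta_1) L_1(\zeta_1,\zeta_2,\eta_1)$ on the torus, exactly as in the proof of Proposition \ref{5.1.}. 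Second, I would invoke Corollary \ref{easy}, which asserts that every RIF, with or without singularities, belongs to $\mathcal{D}_{1,1,1}(\mathbb{D}^2)$. Combining these two facts immediately forces $L_j \in L^2(\mathbb{T}^3)$ for $j = 1,2$, which is the assertion of the corollary.

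There is no genuine obstacle in this argument, as all the analytic work has already been carried out: the equivalence in Proposition \ref{5.1.} encodes the identity between the mixed Dirichlet-type integral and a weighted $L^2$-norm of the Agler kernel on $\mathbb{T}^3$, while Corollary \ref{easy} supplies the finiteness of that integral at $\alpha = 1$. The only point meriting a word of care is the verification that the endpoint $\alpha = 1$ is covered by Proposition \ref{5.1.}; this is transparent since the derivation there holds verbatim for any $\alpha \in (0,2)$, and at $\alpha = 1$ the weight simply disappears. Thus the corollary follows directly, and one may note in passing that it gives an intrinsic $L^2(\mathbb{T}^3)$ bound on the refined Agler kernels valid for all RIFs, independent of contact order.
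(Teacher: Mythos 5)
Your proof is correct and takes essentially the same approach as the paper: the paper's own proof likewise combines Corollary \ref{easy} (every RIF lies in $\mathcal{D}_{1,1,1}(\mathbb{D}^2)$) with Proposition \ref{5.1.} specialized to $\alpha=1$, where the weight $|\zeta_j-\eta_j|^{(1-\alpha)/2}$ becomes trivial. Your write-up simply makes explicit the endpoint verification at $\alpha=1$ that the paper leaves implicit.
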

\begin{proof} From Corollary \ref{easy}, we know that every RIF belongs to the Dirichlet-type space $\mathcal{D}_{1,1,1}(\mathbb{D}^2).$ This, in turn, implies that the functions $L_{j}(\zeta_1,\zeta_2,\eta_1)\in L^2(\mathbb{T}^2).$ 
\end{proof}
Next we prove a similar proposition for the classical weighted Dirichlet spaces $\mathfrak{D}_{\alpha}(\mathbb{D}^2)$. This time we apply the weighted Douglas Formula on the bidisc.
\begin{proposition}
Let $\varphi$ be a RIF and $L_j$ its Refined Agler Kernels. Then $\varphi\in\mathfrak{D}_{a}(\mathbb{D}^2)$ if and only if
$$\int_{\mathbb{T}^4}\frac{|\zeta_2-\eta_2|^{1-\alpha}}{|\zeta_1-\eta_1|^{1+\alpha}}|L_2(\zeta_1,\zeta_2,\eta_2)-L_2(\eta_1,\eta_2,\zeta_2)|^2|d\zeta||d\eta|<+\infty$$
\label{prop}
\end{proposition}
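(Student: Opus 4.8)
The plan is to combine the weighted Douglas formula on the bidisc (Proposition \ref{weighteddouglas}) with the refined Agler decomposition of Knese, in the same spirit as Proposition \ref{5.1.} but now keeping \emph{both} torus variables in play. By Proposition \ref{weighteddouglas}, membership $\varphi\in\mathfrak{D}_\alpha(\mathbb{D}^2)$ is equivalent to the finiteness of $\mathrm{Doug}_a(\varphi)$, and since $\varphi$ is a RIF the first three terms of $\mathrm{Doug}_a(\varphi)$ are automatically finite: the slice functions $z_1\mapsto\varphi(z_1,0)$ and $z_2\mapsto\varphi(0,z_2)$ are one-variable rational functions holomorphic across $\mathbb{T}$, because boundedness of $\varphi$ forces the denominator $p$ to be zero-free on the boundary slices $\mathbb{T}\times\{0\}$ and $\{0\}\times\mathbb{T}$; hence these slices are smooth on $\overline{\mathbb{D}}$ and lie in every one-variable space $\mathfrak{D}_\alpha(\mathbb{D})$. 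The whole question thus reduces to the convergence of the mixed fourth-order integral.

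For that term I would substitute the refined Agler decomposition $f(z)-f(w)=(z_1-w_1)L_1(z,w)+(z_2-w_2)L_2(z,w)$ into the numerator, applying it twice with the first coordinate held fixed. This yields
\begin{align}
\varphi(\zeta_1,\zeta_2)-\varphi(\zeta_1,\eta_2)&=(\zeta_2-\eta_2)L_2(\zeta_1,\zeta_2,\eta_2),\notag\\
\varphi(\eta_1,\eta_2)-\varphi(\eta_1,\zeta_2)&=-(\zeta_2-\eta_2)L_2(\eta_1,\eta_2,\zeta_2),\notag
\end{align}
so that the mixed second difference factors as
$$\varphi(\zeta_1,\zeta_2)-\varphi(\zeta_1,\eta_2)+\varphi(\eta_1,\eta_2)-\varphi(\eta_1,\zeta_2)=(\zeta_2-\eta_2)\bigl(L_2(\zeta_1,\zeta_2,\eta_2)-L_2(\eta_1,\eta_2,\zeta_2)\bigr).$$
Inserting this into the last integral of $\mathrm{Doug}_a(\varphi)$, the factor $|\zeta_2-\eta_2|^2$ cancels against $|\zeta_2-\eta_2|^{1+\alpha}$ in the denominator, leaving the weight $|\zeta_2-\eta_2|^{1-\alpha}$, and we recover exactly
$$\int_{\mathbb{T}^4}\frac{|\zeta_2-\eta_2|^{1-\alpha}}{|\zeta_1-\eta_1|^{1+\alpha}}\bigl|L_2(\zeta_1,\zeta_2,\eta_2)-L_2(\eta_1,\eta_2,\zeta_2)\bigr|^2|d\zeta||d\eta|.$$
Chaining these equivalences gives both directions of the stated biconditional.

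The step requiring the most care is the passage to the distinguished boundary: the refined Agler decomposition is an identity on $\mathbb{D}^2$, whereas the Douglas integrals are taken over $\mathbb{T}^4$. I would justify evaluating both $\varphi$ and the kernels $L_2$ at boundary points through their non-tangential limits, invoking Knese's theorem that a RIF has unimodular non-tangential limits at every point of $\mathbb{T}^2$, together with the boundary regularity of the refined kernels exploited in the proof of \cite[Theorem 10.2]{PLMS}. Once the substitution is legitimate on $\mathbb{T}^4$, the remaining manipulation is purely algebraic and the weight bookkeeping is immediate, so this boundary-limit justification is the only genuinely delicate point.
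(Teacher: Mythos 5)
Your proposal takes essentially the same route as the paper's own proof: both substitute the refined Agler decomposition (applied twice with the first coordinate held fixed) into the mixed fourth-order term of the weighted Douglas formula of Proposition \ref{weighteddouglas}, cancel the factor $|\zeta_2-\eta_2|^2$ against the weight $|\zeta_2-\eta_2|^{1+\alpha}$, and read off the equivalence. You additionally supply two details the paper leaves implicit --- the automatic finiteness of the first three Douglas terms for a RIF (since the slices through the origin have denominators that are zero-free on $\overline{\mathbb{D}}$) and the boundary-limit justification for evaluating the kernels on $\mathbb{T}^4$ --- and your argument $L_2(\zeta_1,\zeta_2,\eta_2)$ is the notationally consistent form of what the paper writes as $L_2(\zeta_1,\zeta_2,\eta_1)$.
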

\begin{proof} The main tool of the proof is the refined Agler decomposition and the weighted Douglas Formula for the weighted Dirichlet space of the bidisc we proved above. Set $(z_1,z_2)=(\zeta_1,\zeta_2)\in\mathbb{T}^2$ and $(w_1,w_2)=(\eta_1,\eta_2)\in\mathbb{T}^2$. From the refined Agler decomposition we obtain
$$\varphi(\zeta_1,\zeta_2)-\varphi(\zeta_1,\eta_2)=(\zeta_2-\eta_2)L_2(\zeta_1,\zeta_2,\eta_1),$$
and
$$\varphi(\eta_1,\eta_2)-\varphi(\eta_1,\zeta_2)=(\eta_2-\zeta_2)L_2(\eta_1,\eta_2,\zeta_2),$$
for all $\zeta,\eta\in \mathbb{T}^2.$
Summing up,
\begin{align}
\varphi(\zeta_1,\zeta_2)-\varphi(\zeta_1,\eta_2)+\varphi(\eta_1,\eta_2)-\varphi(\eta_1,\zeta_2)=(\zeta_2-\eta_2)(L_2(\zeta_1,\zeta_2,\eta_1)-L_2(\eta_1,\eta_2,\zeta_2))
\end{align}
for $\zeta_1,\zeta_2,\eta_1,\eta_2\in \mathbb{T}.$
The result follows after substituting into the weighted Douglas norm of $\mathfrak{D}_{a}(\mathbb{D}^2).$
\end{proof}
\begin{remark}
    In the weighted case, the local Dirichlet integral formula does not hold even in the one dimensional setting. This poses the most significant obstacle in this method.
\end{remark}
\section{Examples} \label{section 6}
In this section we showcase that the standard example 
\[\kappa(z_1,z_2)=\frac{2z_1z_2-z_1-z_2}{2-z_1-z_2}, \quad z\in \mathbb{D}^2,\]
is in $\mathcal{D}(\mathbb{D}^2),$ but fails to belong to  $\mathcal{D}_{n,m}(\mathbb{D}^2)$ whenever $\max(n,m)\ge 2.$ Moreover, we prove that the AMY function belongs to $\mathcal{D}_{1,1,\alpha}(\mathbb{D}^2)$ if and only if $\alpha<\frac{5}{4}.$
These examples are covered by our main results but we here illustrate how to recover membership by direct computations.
\begin{example} $\kappa \in \mathcal{D}(\mathbb{D}^2).$ 
\end{example}
\begin{proof} Due to symmetry of the RIF, it suffices to prove that
$$\lim_{r\to1^{-}}I_r(\kappa)=\lim_{r\to 1^{-}}\int_{\mathbb{T}}\left(\int_{\mathbb{D}}\left|\frac{\partial \kappa}{\partial z_1}(z_1,re^{i\theta})\right|^2dA(z_1)\right)d\theta<+\infty.$$
Calculations show that 
$$\frac{\partial \kappa}{\partial z_1}(z_1,z_2)=\frac{-2(z_2-1)^2}{(2-z_1-z_2)^2}$$
for all $(z_1,z_2)\in\mathbb{D}^2.$
Hence 
$$I_r(\kappa)=\int_{\mathbb{T}}|re^{i\theta}-1|^4\left(\int_{\mathbb{D}}\frac{1}{|2-re^{i\theta}-z_1|^4}dA(z_1)\right)d\theta$$
Set $w=2-re^{i\theta}$ and observe that $|w|\ge 1$ which implies that $\frac{1}{|w|}\leq 1.$ Factoring out $w$ from the denominator in the inner integral we get
$$I_r(\kappa)=\int_{\mathbb{T}}\frac{|re^{i\theta}-1|^4}{|w|^4}\left(\int_{\mathbb{D}}\frac{1}{\left|1-\frac{z_1}{w}\right|^4}dA(z_1)\right)d\theta.$$
We now observe that
$$\int_{\mathbb{D}}\frac{1}{\left|1-\frac{z_1}{w}\right|^4}dA(z_1)=\langle k_{\frac{1}{\overline{w}}}(z_1),k_{\frac{1}{\overline{w}}}(z_1)\rangle=\frac{1}{\left(1-\left|\frac{1}{w}\right|^2\right)^2}=\frac{|w|^4}{(|w|^2-1)^2}.$$
Substituting inside the integral yields
$$I_r(\kappa)=\int_{0}^{2\pi}\frac{|re^{i\theta}-1|^4}{(|2-re^{i\theta}|^2-1)^2}=\int_{0}^{2\pi}\frac{(r^2-2r\cos \theta+1)^2}{(r^2-4r\cos \theta+3)^2}d\theta<+\infty$$
and the result follows. 
\end{proof}
\begin{example} $\kappa \notin \mathcal{D}_{n,m}(\mathbb{D}^2),$ for $\max(n,m)\ge 2.$
\end{example}
\begin{proof}
We shall follow the approach and the calculations in \cite[Example 12.1]{PLMS}. We will briefly navigate the reader through the calculations by writing out the important estimates. We will ommit the majority of the details, as the interested reader can find them on \cite{PLMS}. The function $\kappa$ has Taylor series representation
$$\kappa(z_1,z_2)=a_{00}+\sum_{k,\ell=0}^{\infty}a_{(k+1)(\ell+1)}z_1^kz_2^{\ell},$$
and its coefficients satisfy $$a_{(k+1)(\ell+1)}=\binom{k+\ell}{k}2^{-(k+\ell)}-\binom{k+\ell+2}{k+1}2^{-(k+\ell+2)}\approx \frac{1}{(k+\ell)^\frac{3}{2}},$$
with the last equality being an application of Stirling's approximation. For sufficient large indices $k,\ell,$ we take the tail of the sum and we find that
$$\|\kappa\|^2_{\mathcal{D}_{n,m}(\mathbb{D}^2)}\ge \sum_{k\ge N}^{\infty}\left(k^{2n-1}+k^{2m-1}\right)\frac{1}{k^{5/2}}\approx\sum^{\infty}_{k\ge N}\frac{1}{k^{-2n+1+5/2}}+\sum_{k\ge N}^{\infty}\frac{1}{k^{-2m+1+5/2}},$$
for $N$ sufficiently large.
Immediately, one observes that we lose convergence whenever at least one of $n,m$ are greater than $\frac{5}{4}.$ Restricting to integer values, we see that we lose membership whenever we differentiate $\kappa$ a second time in either $z_1$ or $z_2.$ 
\end{proof}
\begin{example}
Consider $\kappa=\frac{2z_1z_2-z_1-z_2}{2-z_1-z_2}.$ If $\alpha<1/2,$ then $\varphi\in\mathfrak{D}_{\alpha}(\mathbb{D}^2)$
\end{example}
In this example we will consider the refined Agler decomposition of $\kappa,$ following the approach of Proposition \ref{5.1.}. We will prove that for all $2\alpha<1,$ the RIF $\kappa$ belongs to $\mathcal{D}_{1,1,2\alpha}(\mathbb{D}^2).$  We have
$$L_{1}(\zeta_1,\zeta_2,\eta_1)=\frac{2(\zeta_2-1)(1-\zeta_2)}{(2-\zeta_1-\zeta_2)(2-\eta_1-\zeta_2)}=\frac{-2(1-\zeta_2)^2}{(2-\zeta_1-\zeta_2)(2-\eta_1-\zeta_2)}.$$
Now consider $\alpha<1/2.$ Then $1-2\alpha>0$ and the term $|\zeta_1-\eta_1|^{1-2\alpha}\leq 2^{1-2\alpha}$ by the triangle inequality. Hence,
\begin{align}\int_{\mathbb{T}^3}|\zeta_1-\eta_1|^{1-2a}|L_1(\zeta_1,\zeta_2,\eta_1)|^2|d\zeta_1||d\zeta_2||d\zeta_3|=&\int_{\mathbb{T}^3}\frac{4|\zeta_1-\eta_1|^{1-2\alpha}|1-\zeta_2|^4}{|2-\zeta_1-\zeta_2|^2|2-\eta_1-\zeta_2|^2}|d\zeta_1||d\zeta_2||d\zeta_3|&&\\
\lesssim &\int_{\mathbb{T}^3}\frac{|1-\zeta_2|^4|d\zeta_1||d\zeta_2||d\eta_1|}{|2-\zeta_1-\zeta_2|^2|2-\eta_1-\zeta_2|^2}\notag&&\\ 
\approx&\int_{\mathbb{T}}|1-\zeta_2|^4\left(\int_{\mathbb{T}}\frac{|d\zeta_1|}{|2-\zeta_1-\zeta_2|^2}\right)^2|d\zeta_2|\notag&&\\
\approx&\int_{\mathbb{T}}\frac{|1-\zeta_2|^4}{(|2-\zeta_2|^2-1)^2}|d\zeta_2|\notag
\end{align}
Note that $|1-\zeta_2|^4=|1-e^{i\theta}|^4=(2|\sin(\theta/2)|)^4=16\sin^{4}(\theta/2).$ Hence
$$\int_{\mathbb{T}}\frac{|1-\zeta_2|^4}{(|2-\zeta_2|^2-1)^2}|d\zeta_2|=\int_{0}^{2\pi}\frac{16\sin^4(\theta/2)}{64\sin^4(\theta/2)}d\theta=\frac{1}{4}<\infty.$$
This means that $\kappa\in\mathcal{D}_{1,1,2\alpha}(\mathbb{D}^2)$ for all $\alpha<1.$ By Proposition \ref{prop} then, we obtain that $\kappa\in \mathfrak{D}_{\alpha}(\mathbb{D})$ for all $\alpha<1/2.$
It is worth noting here that the sharp constant $\alpha$ for which we have membership of $\kappa$ in the spaces $\mathfrak{D}_{\alpha}$ is $\alpha=3/4$ (see  \cite{PLMS} example 12.1). Nevertheless this method aligns with the fact that all RIFs belong to $\mathcal{D}_{1,1}(\mathbb{D}^2)$ and, as a consequence, to $\mathfrak{D}_{1/2}(\mathbb{D}^2).$
\begin{example} (AMY example) Let $$\psi(z_1,z_2)=\frac{4z_1^2z_2-z_1^2-3z_1z_2-z_1+z_2}{4-3z_1-z_2-z_1z_2+z_1^2}.$$ Then $\psi\in \mathcal{D}_{1,1,\alpha}(\mathbb{D}^2)$ if and only if $\alpha<\frac{5}{4}$
\end{example}
We need to estimate, from above and below, the integrals 
$I_1=\int_{\mathbb{T}}\int_{\mathbb{D}}\left|\frac{\partial \psi}{\partial z_1}(z_1,\zeta_2)\right|^2dA_{\alpha}(z_1)d\theta$ and $I_2=\int_{\mathbb{T}}\int_{\mathbb{D}}\left|\frac{\partial \psi}{\partial z_1}(\zeta_1,z_2)\right|^2dA_{\alpha}(z_2)d\theta.$ The sufficiency part is just an application of the sufficiency part of Theorem \ref{membership}. For the necessity part, one needs to study the branches of the zero set of $\psi$ for both cases, i.e. whenever the $z_1-$variable is fixed on the circle and whenever the $z_2-$variable is fixed on the circle.

The branches of the zero set of the slice $\psi_{\zeta_2}(z_1)$ are
\[z_1=a_1(z_2)=\frac{3z_2+1 + \sqrt{-7z_2^2+10z_2+1}}{2(4z_2-1)}\quad 
\textrm{and} \quad 
z_1=a_2(z_2)=\frac{3z_2+1 - \sqrt{-7z_2^2+10z_2+1}}{2(4z_2-1)};\]
see Figure 1 for a visualization.
The contact order of the singularity $(1,1)$ for the AMY function is well known to be $K=4$, viz. \cite[Example 2]{PLMS}.

We estimate the integral $I_1$ from below and obtain
\begin{align}
\int_{\mathbb{T}}\int_{\mathbb{D}}\left|\frac{\partial \psi}{\partial z_1}(z_1,\zeta_2)\right|^2dA_{\alpha}(z_1)d\theta &\ge \max_{j=1,2}\int_{\mathbb{T}}(1-|a_j(\zeta_2)|^2)^{1-\alpha}\rho_{\mathbb{D}}(a_1(\zeta_2),a_2(\zeta_2))|d\zeta_2|&&\\
&\approx \int_{U}|1-\zeta_2|^{4(1-\alpha)}|d\zeta_2|\notag
\end{align}
\begin{figure}[h!]
\begin{center}
\includegraphics[width=14cm ]{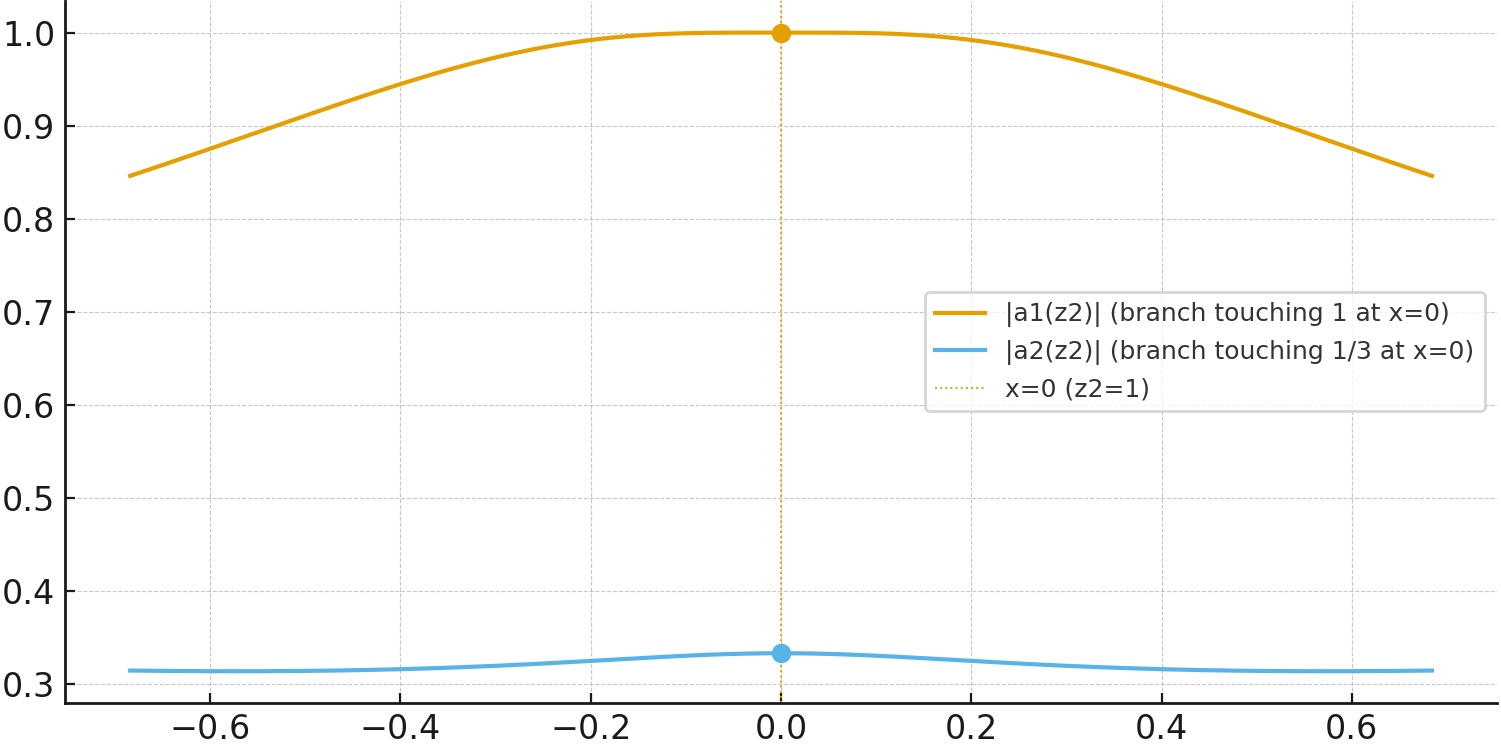}
\caption{Graphs of the functions $|a_1(\zeta_2)|$ and $|a_2(\zeta_2)|$ on a neighborhood of the origin after mapping the $\zeta_2-$variable on the real line. We observe that as $z_2\to 1,$ the hyperbolic pseudodistance $\rho_{\mathbb{D}}(a_1(\zeta_2),a_2(\zeta_2))\to 1$ because $a_1(1)=1$ and $a_2(1)=\frac{1}{3}$.}
\end{center}
\end{figure}
where $U$ is a neighborhood around $1.$ This integral evidently converges if and only if $\alpha<\frac{5}{4}.$

The integral $I_2$ is associated with a one-branch zero set, corresponding to having only one Blaschke factor. The reader is invited to check that the integral $I_2$ converges also if and only if $\alpha<\frac{5}{4}.$ Combining these with the sufficiency estimates, we get a sharp cut-off for $\alpha$ and membership of the AMY function in $\mathcal{D}_{1,1,\alpha}.$
\section{Acknowledgements} The first named author was financially supported by the National
Science Center, Poland, SHENG III, research project 2023/48/Q/ST1/00048. Moreover, the first named author acknowledges financial support for his research stay in Stockholm University from the program Excellence Initiative at the Jagiellonian University in Krak\'ow `Research Support Module 2025."  

We sincerely thank the anonymous referee for a very careful reading of the paper, and for making numerous and detailed suggestions on how to improve it. In particular, Lemma 4.3 was suggested to us by the referee.


\begin{thebibliography}{16}
\bibitem{AMY} J. Agler and J.E. McCarthy, \textit{Pick interpolation and Hilbert function spaces}, volume 44 of Graduate Studies in Mathematics.
American Mathematical Society, Providence, RI, 2002. 
\bibitem{Agler} J. Agler and J. McCarthy, \textit{Distinguished varieties}, Acta Math. 194
(2005), no. 2, 133–153. 
\bibitem{AglerInterpolation} J. Agler and J.E. McCarthy, \textit{Nevanlinna-Pick interpolation on the bidisk}, J. Reine Angew. Math.
506 (1999), 191–20
\bibitem{CLARK 2}J. T. Anderson, L. Bergqvist, K. Bickel, J. A. Cima, and A. A. Sola, \textit{Clark measures
for rational inner functions II: general bidegrees and higher dimensions}, Ark. Mat. 62 (2024), no. 2.
331–368.
\bibitem{Arcoetal}N. Arcozzi, P. Mozolyako, K.-M. Perfekt, and G. Sarfatti, \textit{Bi-parameter potential theory and Carleson measures for the Dirichlet space on the bidisc}, Discrete Anal. 2023, Paper no.22.
\bibitem{Cyclicity1}  C. B\'en\'eteau, A.A. Condori, C. Liaw, D. Seco, and A.A. Sola, \textit{Cyclicity in Dirichlet-type spaces
and extremal polynomials II: functions on the bidisk}, Pacific J. Math. 276 (2015).
\bibitem{Cyclicity2} C. B\'en\'eteau, G. Knese, \L . Kosi\'nski, C. Liaw, D. Seco, and A. Sola, \textit{Cyclic polynomials in two
variables}, Trans. Amer. Math. Soc. 368 (2016), 8737-8754

\bibitem{Chavan} S. Bera, S. Chavan, and S. Ghara, \textit{Dirichlet-type spaces of the unit bidisc and toral 2-isometries}, Canadian J. Math. 2025;77(4):1271-1293. 

\bibitem{Bergqvist} L. Bergqvist, \textit{Rational Inner Functions and their Dirichlet Type Norms}, Computational Methods and Function Theory, {\bf 23} (2023), 563–587.


\bibitem{Me} A. Beslikas, \textit{Composition Operators and Rational Inner Functions on the bidisc,} Proc. Amer. Math. Soc., Vol. 153, No. 8, August 2025.

\bibitem{Beurling} A. Beurling, \textit{On two problems concerning linear transformations in Hilbert space,} Acta Math. 81 (1948),
239–255.


\bibitem{ClarkMeasures1} K. Bickel, J. A. Cima, A.A. Sola, \textit{Clark Measures for Rational Inner Functions},
Michigan Math. J. {\bf 73} (2023), 1021-1057. 

\bibitem{Hilbert1} K. Bickel and G. Knese, \textit{Inner functions on the bidisk and associated Hilbert spaces}, J. Funct. Anal.
Volume {\bf 265} (2013), 2753-2790.

\bibitem{Polonici}  K. Bickel, G. Knese, J. E. Pascoe, and A. Sola ,\textit{Local theory of stable polynomials and bounded rational functions of several variables}, Ann. Polon. Math. 133 (2024), no. 2, 95-169.

\bibitem{PLMS} K. Bickel, J.E. Pascoe, A. Sola, \textit{Derivatives of Rational Inner Functions and integrability at the boundary,}
Proc. London Math. Soc. {\bf 116} (2018), 281-329

\bibitem{Pisa} K. Bickel, J.E.  Pascoe, and A. Sola, \textit{Level curve portraits of rational inner functions}. Annali Scuola Normale Superiore di Pisa - Cl. Scienze, {\bf 21} (2020), 449-494.

\bibitem{AJM} K. Bickel, J.E. Pascoe, and A. Sola, \textit{Singularities of Rational Functions in higher dimensions,} Amer. J. Math. {\bf 144} (2022), 1115-1157.

\bibitem{Toeplitz}A. B\"ottcher and Silbermann B. \textit{Analysis of Toeplitz operators.} Springer Monographs in Mathematics. Springer-Verlag, Berlin, second sdition, 2006.


\bibitem{Ransford} O. El-Fallah, K. Kellay, J. Mashreghi, and T. Ransford, \textit{A primer on the Dirichlet space}, Cambridge
Tracts in Mathematics 203, Cambridge University Press, 2014

\bibitem{Fulton} W. Fulton, \textit{Algebraic curves}, Advanced Book Classics, Addison-Wesley Publishing Company, Advanced Book Program, Redwood City, CA, 1989. An introduction to algebraic geometry; Notes written
with the collaboration of Richard Weiss; Reprint of 1969 original. MR1042981

\bibitem{GMR}S. Garcia, J. Mashreghi, and W.T. Ross, \textit{Finite Blaschke products and their connections}, Springer-Verlag, Cham, 2018.

\bibitem{Hedenmalm}  H. Hedenmalm, \textit{Outer functions in function algebras on the bidisk}, Trans. Amer. Math. Soc. 306
(1988), 697-714

\bibitem{HedenmalmBook} H. Hedenmalm, B. Korenblum, K. Zhu, \textit{Theory of Bergman spaces}, Springer, New
York, 2000

\bibitem{JP}D. Jupiter and D. Redett, \textit{Mutlipliers on Dirichlet-type spaces}, Acta Sci. Math. (Szeged) {\bf 72} (2006), 179-203.

\bibitem{Kaptanoglu} H.T.Kaptano\u{g}lu,  \textit{Mobius-invariant Hilbert spaces in Polydiscs }, Pacific J. Math. {\bf 163} (1994), 337-360.
\bibitem{Knese1} G. Knese, \textit{Rational Inner Functions in the Schur-Agler class of the polydisc}, Publ. Mat. Vol. 55, No. 2 (2011), pp. 343-357

\bibitem{KneseIndiana}G. Knese,  \textit{A refined Agler decomposition and geometric applications},  Indiana Univ. Math. J. {\bf 60} (2011), 1831-1841.

\bibitem{Knese2} G. Knese, \textit{Integrability and regularity of Rational Inner Functions,} Proc. London Math. Soc. {\bf 111} (2015), 1261-1306.
\bibitem{TAMSKNESE} G. Knese, \textit{Boundary local integrability of rational functions in two variables,} to appear in Trans. Amer. Math. Soc. 
https://doi.org/10.48550/arXiv.2404.05042

\bibitem{Cyclicity3} G. Knese, \L. Kosi\'nski, T.J. Ransford, and A.A. Sola, {\it Cyclic polynomials in anisotropic Dirichlet spaces},  J. Anal. Math. {\bf 138} (2019) 23–47. 

\bibitem{Kollar} J. Koll\'ar, \textit{Bounded meromorphic functions on the complex 2-disc}, Periodica Mathematica Hungarica
19 (2023), 1-7.

\bibitem{Lukasz} \L{}. Kosi\'nski, \textit{Three-point Nevanlinna–Pick problem in the polydisc}, Proceedings of the London Mathematical Society, Volume 111 (2015), 887–910.
\bibitem{NZprep}R. Naiwal and A. Zalar, \textit{Cyclic polynomials in Dirichlet-type spaces on the unit bidisk}, preprint, https://arxiv.org/abs/2511.13441
\bibitem{Pas17}J.E. Pascoe, \textit{A wedge-of-the-edge theorem: analytic continuation of multivariable Pick functions in and around the boundary}, Bull. London Math. Soc. {\bf 49} (2017), 916-925.

\bibitem{Perfekt}K.-K. Perfekt, \textit{Rectangular summation of Fourier series and multi-parametric capacity}, Potential Anal. {\bf 55} (2021), 389-402.
\bibitem{Rudin} W. Rudin, \textit{Function Theory in polydisks}, W. A. Benjamin, Inc., New York Amsterdam, 1969.

\bibitem{Pouriya1} P. Torkinejad-Ziarati, \textit{Cyclicity in Poletsky-Stessin Weighted Bergman Spaces}, preprint, 4th July 2025, 	arXiv:2507.03767.

\bibitem{Yama}S. Yamashita, \emph{Criteria for functions to be of Hardy class $H^p$}, Proc. Amer. Math. Soc. {\bf 75} (1979), 69-72.
\end{thebibliography}
\end{document}